\newcommand{\jump}[1]{[\![#1]\!]}
\begin{document}
\title{Projection stabilisation of Lagrange multipliers for the
  imposition of constraints on interfaces and boundaries}
\titlerunninghead{Projection stabilisation
 of Lagrange multipliers}
\author{Erik Burman}
\authorrunninghead{Erik Burman}
\affil{Department of Mathematics\\ University College London\\ UK-WC1E
  6BT\\ United Kingdom \\
\tt{E.Burman@ucl.ac.uk}
}

\abstract{
Projection stabilisation applied to general Lagrange multiplier finite element methods is introduced
and analysed in an abstract framework. We then consider some
applications of the stabilised methods: (i) the weak imposition of
boundary conditions, (ii) multi-physics coupling on unfitted meshes,
(iii) a new interpretation of the classical residual stabilised Lagrange
multiplier method introduced in  {\emph{H.~J.~C.~ Barbosa and T.~J.~R.~Hughes,  The finite element method with {L}agrange multipliers on the
  boundary: circumventing the {B}abu\v ska-{B}rezzi condition. {\em Comput. Methods Appl. Mech. Engrg.}, 85(1):109--128, 1991}}
.
}

\newcommand{\sder}[1]{#1_{\beta}}
\newcommand{\cder}[1]{#1_{\beta^\perp}}
\newcommand{\Proj}{{\mathcal P}}
\newcommand{\restr}[1]{ \rule[-1ex]{0.1pt}{2.1ex}_{#1}}
\newcommand{\norm}[2]{\left\|#1\right\|_{#2}}
\newcommand{\hexagon}{
\setlength{\unitlength}{.5mm}
\begin{figure}
\begin{center}
\begin{picture}(120,100)(-5,0)
   \linethickness{0.5pt}
   \put(60,50){\circle*{3}}
   \put(70,60){\makebox(0,0)[t]{$S_0$}}
   \put(60,50){\line(1,0){40.0}}
   \put(100,50){\circle*{3}}
   \put(60,50){\line(-1,0){40.0}}
   \put(20,50){\circle*{3}} 
   \put(20,50){\line(1,2){20.0}}
   \put(40,90){\circle*{3}} 
   \put(40,90){\line(1,0){40.0}}
   \put(80,90){\circle*{3}} 
   \put(40,90){\line(1,-2){20.0}}
   \put(80,90){\line(1,-2){20.0}}
   \put(80,90){\line(-1,-2){20.0}}
   \put(20,50){\line(1,-2){20.0}}
   \put(40,10){\circle*{3}} 
   \put(40,10){\line(1,0){40.0}}
   \put(80,10){\circle*{3}} 
   \put(40,10){\line(1,2){20.0}}
   \put(80,10){\line(1,2){20.0}}
   \put(80,10){\line(-1,2){20.0}}
\end{picture}
\end{center}
\caption{Macroelement $\Omega_{S_0}$ in two space dimensions}
\label{macro}
\end{figure}
}
\newcommand{\macroelement}{
\setlength{\unitlength}{.5mm}
\begin{figure}
\begin{center}
\begin{picture}(120,100)(-5,0)
   \linethickness{0.5pt}
   \put(60,50){\circle*{3}}
   \put(70,60){\makebox(0,0)[t]{$S_0$}}
   \put(60,50){\line(1,0){40.0}}
   {\circle*{3}}
   {\line(-1,0){40.0}}
   \put(20,50){\circle*{3}} 
   \put(20,50){\line(1,2){20.0}}
   \put(40,90){\circle*{3}} 
   \put(40,90){\line(1,0){40.0}}
   \put(80,90){\circle*{3}} 
   \put(40,90){\line(1,-2){20.0}}
   \put(80,90){\line(1,-2){20.0}}
   \put(80,90){\line(-1,-2){20.0}}
   \put(20,50){\line(1,-2){20.0}}
   \put(40,10){\circle*{3}} 
   \put(40,10){\line(1,0){40.0}}
   \put(80,10){\circle*{3}} 
   \put(40,10){\line(1,2){20.0}}
   \put(80,10){\line(1,2){20.0}}
   \put(80,10){\line(-1,2){20.0}}
\end{picture}
\end{center}
\caption{Macroelement $\Omega_{S_0}$ in two space dimensions}
\label{macro}
\end{figure}
}
\newcommand{\diagram}{
\begin{figure}
\setlength{\unitlength}{0.75mm}
\begin{center}
\begin{picture}(120,180)(-10,-95)
   \put(12,0){\vector(1,0){75}}
   \put(12,-95){\vector(0,1){190}}
   \put(7,100){\makebox(0,0)[t]{$\bv$}}
   \put(90,-2){\makebox(0,0)[t]{$\bu$}}
   \put(10,90){\line(1,0){4}}
   \put(0,93){\makebox(0,0)[t]{$\frac{3 \pi}{2} - \alpha$}}
   \put(10,40){\line(1,0){4}}
   \put(0,42){\makebox(0,0)[t]{$\frac{\pi}{2} + \alpha$}}
   \put(12,90){\line(1,-1){66}}
   \put(12,40){\line(1,-1){66}}
   \put(10,65){\line(1,0){4}}
   \put(3,66){\makebox(0,0)[t]{$\pi$}}
   \put(45,-2){\line(0,1){4}}
   \put(47,-2){\makebox(0,0)[t]{$\frac{\pi}{2}$}}
   \put(77,3){\line(0,1){4}}
   \put(79,-2){\makebox(0,0)[t]{$\pi$}}
   \multiput(12,65)(4,0){7}{\line(1,0){3}}
   \put(22,75){\makebox(0,0)[t]{(ii)}}
   \multiput(45,-95)(0,4){45}{\line(0,1){3}}
   \put(27,50){\makebox(0,0)[t]{(i)}}
   \multiput(77,-95)(0,4){45}{\line(0,1){3}}
   \put(57,30){\makebox(0,0)[t]{(iii)}}
   \put(68,-4){\makebox(0,0)[t]{(iv)}}
   \put(0,0){\makebox(0,0)[t]{$0$}}
   \put(10,25){\line(1,0){4}}
   \put(0,28){\makebox(0,0)[t]{$\frac{\pi}{2} - \alpha$}}
   \put(10,-25){\line(1,0){4}}
   \put(0,-23){\makebox(0,0)[t]{$-\frac{\pi}{2} + \alpha$}}
   \put(12,25){\line(1,-1){66}}
   \put(12,-25){\line(1,-1){66}}
   \put(3,-64){\makebox(0,0)[t]{$-\pi$}}
   \multiput(10,-65)(4,0){18}{\line(1,0){3}}
   \put(20,12){\makebox(0,0)[t]{(vi)}}
   \put(27,-15){\makebox(0,0)[t]{(v)}}
   \put(57,-35){\makebox(0,0)[t]{(vii)}}
   \put(68,-69){\makebox(0,0)[t]{(viii)}}
\end{picture}
\end{center}
   \caption{The different domains in the $\bu/\bv$-plane}
   \label{anglediagram}
   \end{figure}
}

\newcommand{\angledef}{
\begin{figure}
\begin{center}
\setlength{\unitlength}{0.5mm}
\begin{picture}(120,100)(-5,0)
   \linethickness{0.1pt}
   \put(60,15){\circle*{3}}
   \put(60,15){\vector(4,1){75}}
   \put(60,15){\vector(1,2){30}}
   \put(60,15){\vector(-1,1){45}}
   \put(60,15){\arc{15}{-1.107}{-0.246}}
   \put(67.3,16.8){\line(0,1){2.5}}
   \put(67.3,16.8){\line(-7,3){2.5}}
   \put(60,15){\arc{20}{-2.36}{-1.107}}
   \put(64.4,23.8){\line(-2,4){1.2}}
   \put(64.4,23.8){\line(-4,-1){2.5}}
   \put(60,35){\makebox(0,0)[t]{$\bu$}}
   \put(75,30){\makebox(0,0)[t]{$\bv$}}
   \put(95,80){\makebox(0,0)[t]{$\beta$}}
   \put(10,70){\makebox(0,0)[t]{$\nabla u$}}
   \put(145,40){\makebox(0,0)[t]{$\nabla v_0$}}
    \end{picture}
    \caption{The definition of angles $\bu$ and $\bv$.}
    \label{angles}
\end{center}
\end{figure}
}
        
\newcommand{\meas}[1]{\mbox{m}_{#1}}
\newcommand{\minang}{\alpha_K}
\newcommand{\tf}{v_0}
\newcommand{\femu}{U}
\newcommand{\macroele}{\Omega_{S_0}}
\newcommand{\epara}{\e^K_{\parallel}}
\newcommand{\eperp}{\e^K_{\perp}}
\newcommand{\ugpara}{\nabla {\femu}_{\parallel}^K}
\newcommand{\ugperp}{\nabla {\femu}_{\perp}^K}
\newcommand{\bu}{\theta_K} 
\newcommand{\bv}{\varphi_K}
\newcommand{\bvp}{\tilde{\varphi}_K}
\newcommand{\bfA}{\mbox{\bf A}}
\newcommand{\bfB}{\mbox{\bf B}}
\newcommand{\bfC}{\mbox{\bf C}}
\newcommand{\bfd}{\mbox{\bf d}}
\newcommand{\bfD}{\mbox{\bf D}}
\newcommand{\bfe}{\mbox{\bf e}}
\newcommand{\bfE}{\mbox{\bf E}}
\newcommand{\bff}{\mbox{\bf f}}
\newcommand{\bfF}{\mbox{\bf F}}
\newcommand{\bfg}{\mbox{\bf g}}
\newcommand{\bfG}{\mbox{\bf G}}
\newcommand{\bfI}{\mbox{\bf I}}
\newcommand{\bfK}{\mbox{\bf K}}
\newcommand{\bfM}{\mbox{\bf M}}
\newcommand{\bfn}{\mbox{\bf n}}
\newcommand{\bfN}{\mbox{\bf N}}
\newcommand{\bfP}{\mbox{\bf P}}
\newcommand{\bfq}{\mbox{\bf q}}
\newcommand{\bfr}{\mbox{\bf r}}
\newcommand{\bfR}{\mbox{\bf R}}
\newcommand{\bfS}{\mbox{\bf S}}
\newcommand{\bfu}{\mbox{\bf u}}
\newcommand{\bfU}{\mbox{\bf U}}
\newcommand{\bfv}{\mbox{\bf v}}
\newcommand{\bfvt}{\bar{\mbox{\bf v}}}
\newcommand{\bfV}{\mbox{\bf V}}
\newcommand{\bfw}{\mbox{\bf w}}
\newcommand{\bfW}{\mbox{\bf W}}
\newcommand{\bfx}{\mbox{\bf x}}

\newcommand{\rh}{\varrho}

\newcommand{\bfUw}{\bar{\bf u}}
\newcommand{\Uw}{\bar{u}}

\newcommand{\bA}{\tilde{\bfA}}
\newcommand{\bS}{\tilde{\bfS}}
\newcommand{\bD}{\tilde{\bfD}}
\newcommand{\cD}{\tilde{\cal D}}

\newcommand{\dime}{d}  
\newcommand{\normal}{\mbox{\bf n}} 

\newcommand{\test}{\phi}

\newcommand{\bx}{\bar{x}}
\newcommand{\bt}{\bar{t}}

\newcommand{\un}{{\tilde{u}}}
\newcommand{\eps}{\varepsilon}
\newcommand{\St}{\hat{S}}
\newcommand{\st}{s}
\newcommand{\ps}{\mbox{\boldmath$\psi$}}
\newcommand{\PS}{{\bf \Psi}}
\newcommand{\vp}{\mbox{\boldmath$\varphi$}}
\newcommand{\vpi}{{\varphi}}
\newcommand{\VP}{{\bf \Phi}}
\newcommand{\VPI}{\Phi}
\newcommand{\ut}{{\tilde{u}}}
\newcommand{\Ut}{{\tilde{U}}}
\newcommand{\diff}{\hat{\eps}}
\newcommand{\dif}{{\cal E}}
\newcommand{\Diff}{\bfD}
\newcommand{\bDif}{\tilde{\Diff}}
\newcommand{\dera}{{D_{1}}}
\newcommand{\derb}{{D_{2}}}
\newcommand{\derbd}{{D^{\dif}_{2}}}
\newcommand{\pr}{\mbox{P}}
\newcommand{\pt}{\partial_t}
\newcommand{\px}{\partial_x}
\newcommand{\py}{\partial_y}
\newcommand{\pz}{\partial_z}
\newcommand{\id}{\mbox{I}}

\newcommand{\bbR}{{\Bbb{R}}}
\newcommand{\bbP}{{\Bbb{P}}}
\newcommand{\bfTh}{\mbox{\boldmath$\Theta$}}
\newcommand{\bfbeta}{\mbox{\boldmath$\beta$}}
\newcommand{\bfdelta}{\mbox{\boldmath$\delta$}}
\newcommand{\bftau}{\mbox{\boldmath$\tau$}}
\newcommand{\bfEPS}{{\bf \mathcal{E}}}
\newcommand{\bfxi}{\mbox{\boldmath$\xi$}}
\newcommand{\dxdt}{{\mathrm{d}}x{\mathrm{d}}t}
\newcommand{\dxds}{{\mathrm{d}}x{\mathrm{d}}s}
\newcommand{\bdxdt}{{\mathrm{d}}\bar{x}{\mathrm{d}}\bar{t}}
\newcommand{\dx}{{\mathrm{d}}x}
\newcommand{\dt}{{\mathrm{d}}t}
\newcommand{\ds}{{\mathrm{d}}s}
\newcommand{\dtau}{{\mathrm{d}}\tau}
\newcommand{\bfLam}{\mbox{\boldmath$\Lambda$}}
\newcommand{\bint}{\mbox{$\displaystyle\int_{S_n}$}}
\renewcommand{\theenumi}{(\Alph{enumi})}
\def\eref#1{(\ref{#1})}

\newcommand{\mes}{\operatorname{meas}}
\newcommand{\ang}{\operatorname{ang}}
\renewcommand{\dim}{\operatorname{dim}}

\newcommand{\cT}{\mathcal{T}}
\newcommand{\cE}{\mathcal{E}}

\newcommand{\cut}{c}
\def\IR{\mathbb R}
\begin{article}
\section{Introduction}\label{intro}
The use of Lagrange multipliers to impose constraints in the finite
element method is a well-known and powerful technique. To obtain a
stable method the finite element spaces for the primal variable and
the multiplier must be carefully matched so as to satisfy an inf-sup
condition uniformly in the mesh parameter  (see Babuska
\cite{Ba72}, Brezzi \cite{Bre74}, Pitk\"aranta \cite{Pit79},
\cite{Pit80}). If an unstable pair is used, stability can be recovered using
a stabilised method \cite{BH91, RL04}. 

In many cases such as when
imposing incompressibility for flow problems there are several choices available,
both to design inf-sup stable velocity-pressure pairs (see for
instance \cite{BF91}) and to design
stabilised methods for pairs that do not satisfy the inf-sup
condition. A class of method that has been particularly successful
recently are
projection stabilisation methods. Loosely speaking such methods ensure
stability by adding a term that penalises the difference between the
pressure solution and its projection onto some inf-sup stable space
\cite{BF01, BB01, DB04, Bu08}.

Recently there has been renewed interest in Lagrange
multiplier method in the context of imposing constraints on embedded
boundaries and multi-scale or multi-physics coupling problems
\cite{BMW09, CB09, BDR01, LB10, HR10}.  Also here
care must be taken to chose pairs of finite element spaces that
satisfy the appropriate inf-sup condition, in order to avoid spurious
oscillations or locking.

In some of these cases, although the choice of stable space is known,
it may be inconvenient. Either the spaces may be very complicated to design or use from an
implementation point of view, or the multiplier space simply is too
small to give sufficient control of 
the constraint. Here the state of the art method for stabilisation is
the residual based formulation introduced by Barbosa and Hughes
\cite{BH91}. This method has been shown to be closely related to
Nitsche's method, in cases where the Lagrange multiplier can be
eliminated locally \cite{Sten95}. It can also be applied for interface
coupling proplems, with a large flexibility in the choice of
multiplier space, see for instance \cite{HLPS05}.

It appears that the idea of projection
stabilisation, that has been very successful for Stokes' problem,
has not yet been exploited to its full potential in the context of
other type of problems featuring
Lagrange multipliers. However it appears that such an approach can give
certain advantages.
\begin{itemize}
\item For domain decomposition with non-matching
meshes it allows for the use of a Lagrange multiplier that is defined on
a third mesh which can be chosen arbitrarily (typically
structured). In this case the stabilisation operator only acts on the
multiplier space, see \cite{BH10b}. This reduces the problem of interpolating between two fully
unstructured meshes to that of interpolating from two unstructured
meshes to one structured mesh.

\item Another example is fictitious domain methods where the multiplier can
be chosen piecewise constant per element and distributed in the
interface zone if projection stabilisation is used \cite{BH10a}. This choice is advantageous from the point of
view of implementation, but normally prohibited since the inf-sup condition
fails \cite{GG95}.

\item Compared to Nitsche type methods or the Barbosa-Hughes
  stabilised method the projection stabilised multiplier method does
  not use the trace of the stress tensor explicitly. This is
particularly  advantageous in the nonlinear case, since the
nonlinearity then appears only in the bulk and not in the interface terms.
\end{itemize}

Stabilised Lagrange methods seem to be attracting increasing attention, in
particular for the imposition of embedded Dirichlet boundary
conditions \cite{CB09, GW10, BH10a, HCD11, BC11}. It is interesting to
note that the extension to XFEM type interface coupling methods is
practically always straightforward.

The focus of the present paper is on the generality of this type of
method. We prove a wellposedness result for discrete solutions and a best approximation
result in an abstract framework. Then we show how to apply the ideas
to the analysis and design 
of stabilised Lagrange multiplicator methods
 first in the simple case of the weak imposition of boundary
 conditions and then sketching an unfitted finite element method for
 multi-physics coupling.


As a last example of the applicability of our framework we give a new
interpretation of the non-symmetric version of the method of Barbosa \& Hughes \cite{BH91}, for the imposition of boundary
conditions. In these methods, the stabilisation acts on the difference
between the multiplier and the  gradient of the primal
variable. Using a recent stability result for the penalty-free,
nonsymmetric Nitsche's method \cite{Bu11b}, we show that the
nonsymmetric version of such stabilised
Lagrange multiplier methods are in fact closely related to projection
stabilisation methods by the inf-sup stability of the Lagrange multiplier
space consisting of normal gradients of the primal variable on the boundary trace mesh.

 As a model problem the reader may consider the Poisson problem set on an open connected domain
$\Omega \subset \mathbb{R}^d$, $d=2,3$, with polygonal (or polyhedral)
boundary. Find $u:\Omega \rightarrow \mathbb{R}$ such that
\begin{equation}\label{Poisson}
\begin{split}
-\Delta u & = f \mbox{ in } \Omega\\
u & = 0 \mbox{ on } \partial \Omega.
\end{split}
\end{equation}
The weak formulation of this problem, using Lagrange multipliers to
impose the boundary constraints, takes the following form: find
$(u,\lambda) \in H^1(\Omega) \times H^{-\frac12}(\partial \Omega)$
such that
\begin{multline}\label{weak_Poisson}
\int_\Omega \nabla u \cdot \nabla v ~\mbox{d} x + \int_{\partial
  \Omega} \lambda v  ~\mbox{d} s + \int_{\partial
  \Omega} \mu u ~\mbox{d} s = \int_\Omega f v ~ \mbox{d}x\\
\forall (v,\mu) \in H^1(\Omega) \times H^{-\frac12}(\partial \Omega).
\end{multline}
We will frequently use the notation $a \lesssim b$ for $a \leq C b$
where $C$ is a constant independent of the mesh-size, but not
necessarily of the local mesh geometry. We also assume
quasi-uniformity and shape regularity for all meshes.

\section{Abstract setting}\label{abstractset}
We will here give
an abstract framework for this type of method to give some understanding of the underlying idea. Our aim is to make the simplest possible framework. Let 
$$a(\cdot,\cdot):V\times V \rightarrow \mathbb{R}$$
 and 
$$
b(\cdot,\cdot):L \times V \rightarrow \mathbb{R}
$$ 
be two bilinear forms representing the partial differential operator on weak form and the constraint respectively. 
The abstract formulation 
then writes: find $(u,\lambda) \in V \times L$ such that
\begin{equation}\label{const_cont}
a(u,v)+b(\lambda,v)+b(\mu,u)= (f,v)
\end{equation}
for all $(v,\mu) \in V\times L$. We assume that the spaces $V$ and
$L$ are chosen such that the problem is well posed.
Firstly we assume that the bilinear forms satisfy the following continuities
\[
a(u,v) \lesssim \|u\|_V \|v\|_V,\quad \forall u,v \in V
\]
\[
b(\lambda,v) \lesssim \|\lambda\|_L \|v\|_V,\quad \forall \lambda \in
L \mbox{ and } \forall u\in V
\]
and secondly that the form $a(u,v)$ is coercive on the kernel of $b(\lambda,v)$, i.e. 
\[
\|v\|^2_V \lesssim a(v,v),\, \mbox{ for all } v \mbox{ such that } b(\mu,v)=0,\quad \forall \mu \in L.
\]
Finally we assume that the Babuska-Brezzi condition is satisfied so that $\forall \lambda \in L$ there holds
\[
\|\lambda\|_L \lesssim \sup_{v \in V} \frac{b(\lambda,v)}{\|v\|_V}.
\]
\begin{example}
In the case of the Poisson problem \eqref{Poisson} above the bilinear
forms are given by the weak formulation \eqref{weak_Poisson} as
\begin{equation}\label{ex_a}
a(u,v):=\int_\Omega \nabla u \cdot \nabla v ~\mbox{d} x,
\end{equation}
and
\begin{equation}\label{ex_b}
b(\lambda,v):=\int_{\partial
  \Omega} \lambda v  ~\mbox{d} s.
\end{equation}
The spaces are given by $V:=H^1(\Omega)$ and $L:=
H^{-\frac12}(\partial \Omega)$.
\end{example}
Now consider the discretisation of the problem \eqref{const_cont} in
$V_h \subset V, L_h \subset L$. We assume that these spaces satisfy the discrete version of the inf-sup condition
uniformly so that $\forall \lambda_h \in L_h$ there holds
\begin{equation}\label{brezzi_disc}
\|\lambda_h\|_{L} \lesssim \sup_{v_h \in V_h} \frac{b(\lambda_h,v_h)}{\|v_h\|_{V}}.
\end{equation}

It is known \cite{For77, EG04} that the discrete inf-sup condition is equivalent to the
existence of an interpolant $\pi_F:V \rightarrow V_h$ such that for
any 
$ v \in V$ there holds 
\begin{equation}\label{fortin}
b(v - \pi_F v, \mu_h) = 0 \quad \forall \mu_h \in L_h,\quad \mbox{ and
} \|\pi_F v\|_V \lesssim \| v\|_V.
\end{equation}
We introduce norms defined on functions in
the discrete spaces $\|\cdot\|_{L_h}$  and $\|\cdot\|_{V_h}$ and
assume that the bilinear forms also satisfy the following continuities,
\[
a(u_h,v_h) \leq \|u_h\|_{V_h} \|v_h\|_{V_h},\quad \forall u_h,v_h \in V_h
\]
\[
b(\lambda_h,v_h) \leq \|\lambda_h\|_{L_h} \|v_h\|_{V_h},\quad \forall \lambda_h \in
L_h\mbox{ and } \forall u_h\in V_h.
\]
We will also assume that $\|v_h\|_V \lesssim \|v_h\|_{V_h}$ for all
$v_h \in V_h$. 
\begin{example}
For $V_h \subset H^1(\Omega)$ and $L_h \subset H^{-\frac12}(\partial \Omega)
\cap L^2(\partial \Omega))$ we may take
\[
\|\mu_h\|_{L_h}:= \|h^{\frac12}
\mu_h\|_{L^2(\partial \Omega)}
\]
and
\[
\|v_h\|_{V_h}:= \|\nabla v_h\|_{L^2(\Omega)} + \|h^{-\frac12}
v_h\|_{L^2(\partial \Omega)}.
\]
It follows immediately by the Cauchy-Schwarz inequality that the following continuities hold
\begin{equation}\label{acontdisc}
a(u_h,v_h) \lesssim \|u_h\|_{V_h} \|v_h\|_{V_h} \quad \forall u_h,v_h
\in V_h
\end{equation}
and
\begin{equation}\label{bcontdisc}
b(\lambda_h,v_h) \lesssim \|\lambda_h\|_{L_h} \|v_h\|_{V_h} \quad \forall
\lambda_h \in L_h , \, \forall v_h \in V_h.
\end{equation}
\end{example}

This leads to the following formulation: find $\{u_h,\lambda_h\} \in V_h \times L_h$ such that
\begin{equation}\label{fem1}
a(u_h,v_h)+b(\lambda_h,v_h)-b(u_h,\mu_h)= (f,v_h), \quad\forall \{v_h,\mu_h\} \in V_h \times L_h.
\end{equation}
Then we know that the discrete problem is well posed and we may prove optimal
convergence provided the spaces have optimal approximation
properties. 
We will denote the kernel of $b(\cdot,\cdot)$
by 
$$
K_h := \{v_h \in V_h : b(\mu_h,v_h) = 0,\, \forall \mu_h \in L_h \}.
$$

Consider now the case where we do not want to use the space $L_h$
because it leads to inconvenient interpolation problems. We want to work with the possibly completely
unrelated, richer, space $\Lambda_h$, for which no stability is known
to hold, but which is convenient from the point of view of
implementation. We also assume that there exists a projection $\pi_L:
\Lambda_h \rightarrow L_h$ so that the following continuity holds for
all $v \in V$,
\begin{equation}\label{stab_cond_pil}
b(\lambda_h - \pi_L \lambda_h, v) \lesssim \|\lambda_h - \pi_L
\lambda_h\|_{L_h} \|v\|_V.
\end{equation}
This is a technical assumption that only constrains the choice of
$\pi_L$ used in the analysis and not in practice, as we shall see
later. When the Fortin interpolant is used for the analysis as we do
here this assumption
is convenient since otherwise one must work in the norm $\|\cdot\|_L$
when designing the stabilisation term. Under \eqref{stab_cond_pil} one
may use
the discrete norm directly. An alternative route for the analysis is
to use a discrete inf-sup condition in the discrete norm and
associated analysis.

Instead of \eqref{brezzi_disc} we then have the
following stability property.
\begin{lemma}\label{stab_infsup}
For all $\lambda_h \in \Lambda_h$ there holds
\[
\|\lambda_h\|_{L} \lesssim \sup_{v_h \in V_h} \frac{b(\lambda_h,v_h)}{\|v_h\|_{V}} + \|\lambda_h - \pi_L \lambda_h\|_{L_h},
\]
where $\pi_L:\Lambda_h \rightarrow L_h$ denotes an interpolation operator
from $\Lambda_h$ to $L_h$ such that \eqref{stab_cond_pil} holds.
\end{lemma}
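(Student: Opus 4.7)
The plan is to recover the inf-sup-type statement for $\lambda_h\in\Lambda_h$ by starting from the \emph{continuous} inf-sup condition on $L$ and then bridging to the discrete supremum over $V_h$ via the Fortin interpolant $\pi_F$ from \eqref{fortin}. The projection $\pi_L\lambda_h\in L_h$ acts as a pivot: the Fortin property kills it, leaving only the stabilisation remainder $\lambda_h-\pi_L\lambda_h$, which is precisely what is controlled by the hypothesis \eqref{stab_cond_pil}.

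Concretely, I first apply the continuous Babu\v ska--Brezzi condition to $\lambda_h$ (viewed as an element of $L$) to obtain
\[
\|\lambda_h\|_L \lesssim \sup_{v\in V}\frac{b(\lambda_h,v)}{\|v\|_V}.
\]
For a generic $v\in V$ I would then decompose $v=(v-\pi_F v)+\pi_F v$ and split $b(\lambda_h,v)$ accordingly. Using $\pi_L\lambda_h\in L_h$ in the defining property of the Fortin operator gives $b(\pi_L\lambda_h,v-\pi_F v)=0$, hence
\[
b(\lambda_h,v-\pi_F v)=b(\lambda_h-\pi_L\lambda_h,v-\pi_F v)\lesssim \|\lambda_h-\pi_L\lambda_h\|_{L_h}\,\|v\|_V,
\]
where the last inequality combines \eqref{stab_cond_pil} with the $V$-stability $\|\pi_F v\|_V\lesssim\|v\|_V$. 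The remaining term involves $\pi_F v\in V_h$, so
\[
b(\lambda_h,\pi_F v)\leq \|\pi_F v\|_V\,\sup_{v_h\in V_h}\frac{b(\lambda_h,v_h)}{\|v_h\|_V}\lesssim \|v\|_V\sup_{v_h\in V_h}\frac{b(\lambda_h,v_h)}{\|v_h\|_V}.
\]
Dividing by $\|v\|_V$ and taking the supremum over $v\in V$ gives the desired bound.

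The only genuine subtlety is lining up the norms: the ``orthogonality'' trick produces $b(\lambda_h-\pi_L\lambda_h,\cdot)$, which a priori should be estimated in the dual-type norm $\|\cdot\|_L$, but the stated result demands the discrete norm $\|\cdot\|_{L_h}$. This is exactly the role of the technical assumption \eqref{stab_cond_pil}, which replaces a standard continuity estimate by one in the stabilisation norm; everything else is a routine application of the continuous inf-sup and of the Fortin characterisation of the discrete inf-sup recalled in \eqref{fortin}.
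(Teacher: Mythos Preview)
Your proof is correct and uses the same ingredients as the paper's proof: the continuous inf-sup condition, the Fortin interpolant \eqref{fortin}, and the continuity assumption \eqref{stab_cond_pil}. The only difference is organisational: the paper first splits $\lambda_h=(\lambda_h-\pi_L\lambda_h)+\pi_L\lambda_h$, applies the Fortin property to replace $v$ by $\pi_F v$ in the second summand, and then has to add and subtract $\lambda_h$ a second time (invoking \eqref{stab_cond_pil} twice), whereas you split $v=(v-\pi_F v)+\pi_F v$ directly, which needs \eqref{stab_cond_pil} only once and is marginally cleaner.
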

\begin{proof}
By the continuous inf-sup condition there holds for all $\lambda_h \in \Lambda_h$,
\[
\|\lambda_h\|_{L} \lesssim \sup_{v \in V} \frac{b(\lambda_h,v)}{\|v\|_{V}}.
\]
Since $\pi_L \lambda_h \in L_h$ the condition \eqref{brezzi_disc}
holds and hence by \eqref{stab_cond_pil}
\begin{multline*}
\|\lambda_h\|_{L} \lesssim \sup_{v \in V} \frac{b(\lambda_h - \pi_L
  \lambda_h,v) + b(\pi_L
  \lambda_h,\pi_F v)}{\|v\|_{V}} 
\lesssim \|\lambda_h - \pi_L \lambda_h\|_{L_h} +\frac{b(\pi_L
  \lambda_h,\pi_F v)}{\|\pi_F v\|_{V}}.
\end{multline*}
We may then add and subtract $\lambda_h$ in the last term in the right
hand side to obtain using \eqref{stab_cond_pil}
\[
\frac{b(\pi_L
  \lambda_h,\pi_F v)}{\|\pi_F v\|_{V}} = \frac{b(
  \lambda_h,\pi_F v)+b(\pi_L
  \lambda_h - \lambda_h,\pi_F v)}{\|\pi_F v\|_{V}} \lesssim
\|\lambda_h - \pi_L \lambda_h\|_{L_h} + \sup_{v_h\in V_h} \frac{b(\lambda_h,v_h)}{\|v_h\|_{V}}.
\]
\end{proof}\\
This means that, provided that we can control the distance $\|\lambda_h - \pi_L \lambda_h\|_{L_h}$  from the approximation in the space $\Lambda_h$ to the space $L_h$, which satisfies the LBB-condition, we will have stability using the space $\Lambda_h$. The simplest way of obtaining this is to add a symmetric operator $s(\lambda_h,\mu_h)$,
designed so that 
\begin{equation}\label{stab_equiv}
\|\lambda_h - \pi_L \lambda_h\|_{L_h}^2\lesssim s(\lambda_h,\lambda_h) 
\end{equation} 
to the formulation \eqref{fem1}.
Since the effect of $s(\cdot,\cdot)$ is to reduce the effective
dimension of the space $\Lambda_h$ it 
can be thought of as a {\emph{coarsening operator}}.

This leads to the stabilised formulation:
\begin{multline}\label{stab_fem}
a(u_h,v_h) + b(\lambda_h,v_h) + b(\mu_h,u_h) - s(\lambda_h,\mu_h) = (f,v_h)\, \\ \mbox{ for all } (v_h,\mu_h) \in V_h \times \Lambda_h.
\end{multline}
The signs in \eqref{stab_fem} have been chosen so as to preserve
symmetry, note however that the problem is indefinite due to the
saddle point structure.
 For the operator $s(\cdot,\cdot)$, the following design
 criteria are advantageous:
\begin{itemize}\item minimal dependence of the stable subspace $L_h$
\item the smallest possible stencil
\item optimal weak consistency. 
\end{itemize}
Often $s(\cdot,\cdot)$ may be chosen as the jump 
of the function or of function derivatives over element faces in the
multiplier space and we will explore this possibility further below. 

When we work with the
multiplier space $\Lambda_h$, it is no longer sufficient to assume that $a(u_h,v_h)$ is
coercive on the kernel $K_h$ of $b(\lambda_h,v_h)$, for $\lambda_h\in \Lambda_h$. 
Indeed the stabilisation term could upset the coercivity.
To ensure that the constraint remains strong enough compared to the
penalty term we assume that for all
$u_h \in V_h$ there exists $\xi_h(u_h) \in \Lambda_h$ such that
\begin{equation}\label{coercivity_assumption}
\begin{array}{l}
\alpha_\xi \|u_h\|_{V_h}^2 \leq  a(u_h,u_h) +  b(\xi_h(u_h),u_h)\\[3mm]
s(\xi_h(u_h),\xi_h(u_h))^{\frac12} \leq c_s \|u_h\|_{V_h},
\end{array}
\end{equation}
where $c_s$ can be made small by choosing the stabilisation parameter small.
$\xi_h(u_h)$ is related to the constraint that one wishes to
impose. For the case of weak boundary conditions typically
$\xi_h(u_h)$ is the projection of the trace of $u_h$ onto the Lagrange
multiplier space as we shall see later. We first state and prove the
obtained coercivity result in a lemma and then conclude this section
by our main theorem, showing a best approximation
property for the formulation \eqref{stab_fem}.

\begin{lemma}\label{pos_lemma}
For all $\{u_h,\lambda_h\} \in V_h \times L_h$ there holds 
\begin{equation}\label{positivity0}
\|u_h\|_{V_h}^2 + s(\lambda_h,\lambda_h) \lesssim a(u_h,u_h) +
b(\lambda_h,u_h) - b(\lambda_h - \xi_h(u_h),u_h) + s(\lambda_h,
\lambda_h - \xi_h(u_h)).
\end{equation}
\end{lemma}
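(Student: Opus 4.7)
The plan is to start by simplifying the right-hand side. The two middle terms combine by bilinearity to give
\[
b(\lambda_h,u_h) - b(\lambda_h - \xi_h(u_h),u_h) = b(\xi_h(u_h),u_h),
\]
so the right-hand side reduces to $a(u_h,u_h) + b(\xi_h(u_h),u_h) + s(\lambda_h,\lambda_h-\xi_h(u_h))$. The first two summands are exactly the quantity controlled from below by $\alpha_\xi \|u_h\|_{V_h}^2$ via the first line of the coercivity assumption \eqref{coercivity_assumption}, so that part gives the $\|u_h\|_{V_h}^2$-contribution for free.

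Next I would unfold the stabilisation term by symmetry as
\[
s(\lambda_h,\lambda_h-\xi_h(u_h)) = s(\lambda_h,\lambda_h) - s(\lambda_h,\xi_h(u_h))
\]
and handle the cross term by Cauchy--Schwarz on the symmetric positive semidefinite form $s(\cdot,\cdot)$, using the second line of \eqref{coercivity_assumption} to bound $s(\xi_h(u_h),\xi_h(u_h))^{1/2} \leq c_s \|u_h\|_{V_h}$. A standard Young inequality then gives
\[
s(\lambda_h,\xi_h(u_h)) \leq \tfrac{1}{2} s(\lambda_h,\lambda_h) + \tfrac{c_s^2}{2} \|u_h\|_{V_h}^2,
\]
so that $s(\lambda_h,\lambda_h-\xi_h(u_h)) \geq \tfrac{1}{2} s(\lambda_h,\lambda_h) - \tfrac{c_s^2}{2} \|u_h\|_{V_h}^2$.

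Putting the two estimates together yields a lower bound of the form $(\alpha_\xi - c_s^2/2)\|u_h\|_{V_h}^2 + \tfrac{1}{2} s(\lambda_h,\lambda_h)$ on the right-hand side. The main (indeed only) subtlety is ensuring the coefficient of $\|u_h\|_{V_h}^2$ is positive; this is precisely where the freedom in the stabilisation parameter enters, since by assumption $c_s$ may be chosen small enough that $c_s^2/2 < \alpha_\xi$. Once that is noted, the claim follows after absorbing the negative term, with the hidden constant depending on $\alpha_\xi$ and $c_s$. No estimate of $b(\cdot,\cdot)$ is needed here because the bilinear form $b$ only appears in the algebraic identity above, which is why the lemma holds without invoking \eqref{stab_cond_pil} or the discrete inf-sup bound.
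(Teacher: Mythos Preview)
Your proof is correct and follows essentially the same route as the paper: simplify the $b$-terms to $b(\xi_h(u_h),u_h)$, invoke the first line of \eqref{coercivity_assumption} for the $\|u_h\|_{V_h}^2$ contribution, and handle $s(\lambda_h,\lambda_h-\xi_h(u_h))$ via Cauchy--Schwarz/Young (what the paper calls an arithmetic-geometric inequality) together with the second line of \eqref{coercivity_assumption}, concluding for $c_s$ small enough. The paper's version is terser but the argument and the resulting constants are the same.
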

\begin{proof}
Starting from the right hand side of \eqref{positivity0} we have using
\eqref{coercivity_assumption} and an arithmetic-geometric inequality
\begin{multline*}
a(u_h,u_h) +
b(\lambda_h,u_h) - b(\lambda_h - \xi_h(u_h),u_h) + s(\lambda_h,
\lambda_h - \xi_h(u_h)) \\ 
\ge \alpha_\xi \|u_h\|_{V_h}^2 + \frac12 s(\lambda_h,
\lambda_h) - \frac12 s(\xi_h(u_h),\xi_h(u_h)).
\end{multline*}
Using now the second inequality of \eqref{coercivity_assumption} we may
conclude, assuming $c_s$ small enough.
\end{proof}
\begin{remark}
If $\xi_h(u_h)$ may be chosen such that $s(\xi_h(u_h),\nu_h)=0$,
$\forall \nu_h \in \Lambda_h$ then
\eqref{positivity0} holds without constraints on $c_s$.
\end{remark}
\begin{theorem}\label{best_approx_stab}
Assume that the coercivity condition \eqref{coercivity_assumption} holds for $V_h \times \Lambda_h$ and that there exists a space $L_h$ such that the condition \eqref{brezzi_disc} holds for the pair $V_h\times L_h$.

Then the system \eqref{stab_fem} admits a unique solution
$\{u_h,\lambda_h\}$. This solution satisfies the following best
approximation property 
\[
 \|u - u_h\|_{V} + \|\lambda - \lambda_h\|_{L} \lesssim \inf_{y_h \in V_h} \|u-y_h\|_{V}+\inf_{\nu_h \in \Lambda_h} (\|\lambda - \nu_h\|_{L} + s(\nu_h,\nu_h)^{\frac12}). 
\]
\end{theorem}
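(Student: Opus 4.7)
The plan is to first establish a discrete stability bound for the saddle-point form
$A((u_h,\lambda_h);(v_h,\mu_h)) := a(u_h,v_h)+b(\lambda_h,v_h)+b(\mu_h,u_h)-s(\lambda_h,\mu_h)$
on $V_h \times \Lambda_h$, and then combine it with Galerkin orthogonality and bilinear continuity to conclude the best approximation bound. Existence and uniqueness of $(u_h,\lambda_h)$ then follow from the stability in the finite-dimensional setting.

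For the stability I would work with the triple norm $|||(v_h,\mu_h)||| := \|v_h\|_V + \|\mu_h\|_L + s(\mu_h,\mu_h)^{1/2}$ on the test pair. Testing $A((u_h,\lambda_h);\cdot,\cdot)$ with the pair $(u_h, \xi_h(u_h) - \lambda_h)$ and rearranging the $b$ and $s$ contributions exactly as in the proof of Lemma \ref{pos_lemma} gives $A((u_h,\lambda_h);(u_h, \xi_h(u_h) - \lambda_h)) \gtrsim \|u_h\|_{V_h}^2 + s(\lambda_h,\lambda_h)$ for $c_s$ small enough. The second inequality of \eqref{coercivity_assumption} controls the $s$-seminorm of the test pair by $\|u_h\|_{V_h} + s(\lambda_h,\lambda_h)^{1/2}$, and (as holds in concrete applications, e.g., when $\xi_h$ is a projection of the trace) the $L$-norm of $\xi_h(u_h)$ is bounded by $\|u_h\|_{V_h}$, so the triple norm of the test is controlled by $\|u_h\|_{V_h} + \|\lambda_h\|_L + s(\lambda_h,\lambda_h)^{1/2}$. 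This already yields $\|u_h\|_{V_h} + s(\lambda_h,\lambda_h)^{1/2} \lesssim \sup A/|||\cdot|||$ after absorbing. The missing $\|\lambda_h\|_L$ component is recovered through Lemma \ref{stab_infsup}: rewriting $b(\lambda_h,v_h) = A((u_h,\lambda_h);(v_h,0)) - a(u_h,v_h)$ and invoking the $V$-continuity of $a$ bounds it by $\sup A/|||\cdot||| + \|u_h\|_{V_h} + s(\lambda_h,\lambda_h)^{1/2}$, and combining the two estimates delivers full stability in the triple norm.

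For the best approximation bound, Galerkin orthogonality applied to the discrete error $(y_h - u_h, \nu_h - \lambda_h)$ reads, for arbitrary $(y_h,\nu_h) \in V_h \times \Lambda_h$,
$A((y_h - u_h, \nu_h - \lambda_h);(v_h,\mu_h)) = -a(u-y_h,v_h) - b(\lambda-\nu_h,v_h) - b(\mu_h,u-y_h) + s(\nu_h,\mu_h)$,
after using $b(\mu_h, u) = 0$. Each term on the right is controlled by the continuities of $a$, $b$ and by Cauchy--Schwarz for $s$, times the appropriate piece of $|||(v_h,\mu_h)|||$. Applying the stability bound to $(y_h - u_h, \nu_h - \lambda_h)$, then a triangle inequality (using $\|v_h\|_V \lesssim \|v_h\|_{V_h}$ where needed), and finally taking infima over $y_h$ and $\nu_h$ produces the stated estimate.

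The principal obstacle is bookkeeping of the two pairs of norms: Lemma \ref{stab_infsup} and the Fortin operator of \eqref{fortin} live in the continuous norms $\|\cdot\|_V,\|\cdot\|_L$, while the coercivity \eqref{coercivity_assumption} and the stabilisation equivalence \eqref{stab_equiv} live in the discrete norms $\|\cdot\|_{V_h},\|\cdot\|_{L_h}$. The triple norm on the test pair must be chosen so that Lemma \ref{pos_lemma} supplies the $V_h$-component of the bound while Lemma \ref{stab_infsup} supplies the $L$-component, and so that the coercivity test function $(u_h, \xi_h(u_h) - \lambda_h)$ has triple norm controlled by the solution norm; once this matching is settled, the remainder of the argument reduces to bilinear continuity and a triangle inequality.
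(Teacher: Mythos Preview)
Your inf-sup-then-continuity route is structurally sound, but as you yourself flag parenthetically, it rests on the bound $\|\xi_h(u_h)\|_L \lesssim \|u_h\|_{V_h}$, and this is \emph{not} among the hypotheses of the theorem. The abstract assumption \eqref{coercivity_assumption} only controls $s(\xi_h(u_h),\xi_h(u_h))^{1/2}$, never $\|\xi_h(u_h)\|_L$. Without that extra bound you cannot control the $L$-component of the triple norm of your coercivity test pair $(u_h,\xi_h(u_h)-\lambda_h)$, and the stability step does not close. So as written this is a gap relative to the stated theorem, even though the bound does hold in the concrete applications of Section~\ref{sec:applications}.

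The paper's proof avoids exactly this issue, and the mechanism is the point you are missing: rather than taking an arbitrary $y_h\in V_h$ in the error equation, the paper takes $y_h=\pi_F u$, the Fortin interpolant for the \emph{stable} pair $V_h\times L_h$. Then the troublesome term $b(\mu_h,u-\pi_F u)$ vanishes for every $\mu_h\in L_h$, so after inserting $\pi_L$ one only has to bound $b(\mu_h-\pi_L\mu_h,\,u-\pi_F u)$ with $\mu_h=\zeta_h-\xi_h(\eta_h)$. By \eqref{stab_cond_pil} and \eqref{stab_equiv} this costs only $s(\mu_h,\mu_h)^{1/2}$, which \emph{is} controlled by the second line of \eqref{coercivity_assumption}; no $L$-norm of $\xi_h$ is ever needed. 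The infimum over $y_h$ is recovered at the very end from the $V$-stability of $\pi_F$ (inequality \eqref{pifstab}). If you want to keep your inf-sup formulation, the fix is to drop $\|\mu_h\|_L$ from the triple norm on the test side, use $y_h=\pi_F u$ in the error identity so that $b(\mu_h,u-y_h)$ is handled as above, and recover $\|\lambda-\lambda_h\|_L$ only afterwards via Lemma~\ref{stab_infsup}; this then collapses to the paper's argument.
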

\begin{proof}
Assume that $u_h$ and $\lambda_h$ exist. Now by the triangular inequality
\[
\|u - u_h\|_{V} \leq \|u - \pi_F u\|_{V}+\|\pi_F u - u_h\|_{V_h},
\]
where $\pi_F$ is the Fortin interpolant associated to the spaces $V_h\times L_h$.
Set $\eta_h = u_h - \pi_F u$ and $\zeta_h =  \lambda_h - \nu_h$. By
Lemma \ref{pos_lemma} we have
\begin{multline}\label{positivity}
\|\eta_h\|_{V_h}^2 + s(\zeta_h,\zeta_h) \\ \lesssim a(\eta_h,\eta_h) + b(\zeta_h,\eta_h) 
-b(\zeta_h-\xi_h(\eta_h),\eta_h)  + s(\zeta_h,\zeta_h-\xi_h(\eta_h)).
\end{multline}
Subtracting \eqref{stab_fem} from \eqref{const_cont} with $v = v_h$,
$\mu = \mu_h$,  gives the Galerkin 
orthogonality
\begin{equation}\label{gal_ortho_const}
a(u - u_h,v_h) + b(\lambda - \lambda_h,v_h)+b(\mu_h,u-u_h)+s(\lambda_h,\mu_h) = 0.
\end{equation}
Taking $v_h = \eta_h$ and $\mu_h = -(\zeta_h-\xi_h(\eta_h))$ in \eqref{gal_ortho_const} and adding the 
left hand side of \eqref{gal_ortho_const} to the right hand side of \eqref{positivity} yields
\begin{multline}
\|\eta_h\|_{V_h}^2 + s(\zeta_h,\zeta_h) \lesssim a(u - \pi_F u,\eta_h) + b(\lambda - \nu_h,\eta_h)\\-b(\zeta_h-\xi_h(\eta_h),u-\pi_F u)+s(\nu_h,\zeta_h-\xi_h(\eta_h)).
\end{multline}
 Since $b(\mu_h,u-\pi_F u) = 0$ for all $\mu_h \in L_h$ there holds
\begin{multline*}
\|\eta_h\|_{V_h}^2 + s(\zeta_h,\zeta_h) \lesssim a(u - \pi_F u,\eta_h)
+ b(\lambda - \nu_h,\eta_h) + b(\xi_h(\eta_h) - \pi_L \xi_h(\eta_h) ,u-\pi_F u)\\
-b(\zeta_h-\pi_L \zeta_h,u-\pi_F u)+s(\nu_h,\zeta_h-\xi_h(\eta_h)).
\end{multline*}
Using the continuity \eqref{stab_cond_pil} we have
\begin{multline*}
 b(\xi_h(\eta_h) - \pi_L \xi_h(\eta_h) ,u-\pi_F u) -b(\zeta_h-\pi_L
 \zeta_h,u-\pi_F u) \\ \lesssim  (\|\xi_h(\eta_h) - \pi_L
 \xi_h(\eta_h)\|_{L_h} + \|\zeta_h - \pi_L \zeta_h\|_{L_h}) \|u -
 \pi_F u\|_V
\end{multline*}
and together with the continuity of $a(\cdot,\cdot)$ and
$b(\cdot,\cdot)$ and the bound $\|\eta_h\|_V \lesssim
\|\eta_h\|_{V_h}$ this leads to
\begin{multline}
\|\eta_h\|_{V_h}^2 + s(\zeta_h,\zeta_h) \lesssim (\|u - \pi_F
u\|_{V}+\|\lambda-\nu_h\|_{L})\|\eta_h\|_{V_h} \\
 +\|u - \pi_F u\|_{V} (\|\xi_h(\eta_h) - \pi_L
 \xi_h(\eta_h)\|_{L_h} + \|\zeta_h - \pi_L \zeta_h\|_{L_h}) \\
+s(\nu_h,\nu_h)^{\frac12}(s(\zeta_h,\zeta_h)^{\frac12}+s(\xi_h(\eta_h),\xi_h(\eta_h))^{\frac12}).
\end{multline}
Using the upper bound $\|\zeta_h-\pi_L \zeta_h\|^2_{L_h} \lesssim
s(\zeta_h,\zeta_h)$ of \eqref{stab_equiv}
and \eqref{stab_equiv} combined with the second relation of
\eqref{coercivity_assumption} to obtain $$\|\xi_h(\eta_h) - \pi_L
 \xi_h(\eta_h)\|_{L_h} \lesssim
 s(\xi_h(\eta_h),\xi_h(\eta_h))^{\frac12} \lesssim \|\eta_h\|_{V_h}$$
 we observe that
\begin{multline}\label{etabound}
\|\eta_h\|_{V_h}^2 + s(\zeta_h,\zeta_h) \lesssim (\|u - \pi_F u\|_{V}+\|\lambda-\nu_h\|_{L}+s(\nu_h,\nu_h)^{\frac12})\\
\times (\|\eta_h\|_{V_h}^2 + s(\zeta_h,\zeta_h))^\frac12.
\end{multline}
This gives the following upper bound for $\|\eta_h\|_{V_h}$
\[
\|\eta_h\|_{V_h} + s(\zeta_h,\zeta_h)^\frac12 \lesssim  \|u - \pi_F u\|_{V}+ \inf_{\nu_h \in \Lambda_h} (\|\lambda-\nu_h\|_{L}+s(\nu_h,\nu_h)^{\frac12}).
\]
By the stability of $\pi_F$ we have, for $v_h \in V_h$
\begin{multline}\label{pifstab}
\|u - \pi_F u\|_{V} \leq \|u - v_h\|_{V}+\|v_h - \pi_F u\|_{V}\\
 = \|u - v_h\|_{V}+\|\pi_F (v_h -  u)\|_{V}\lesssim \|u - v_h\|_{V}.
\end{multline}
We conclude that
\[
\|u-u_h\|_{V} + s(\zeta_h,\zeta_h)^\frac12 \lesssim  \inf_{v_h \in V_h} \|u - v_h\|_{V}+ \inf_{\nu_h \in \Lambda_h} (\|\lambda-\nu_h\|_{L}+s(\nu_h,\nu_h)^{\frac12}).
\]
For the bound on $\lambda - \lambda_h$ we use the triangle inequality to
write
\[
\|\lambda - \lambda_h\|_L \leq \|\lambda - \nu_h\|_L + \|\zeta_h\|_L
\]
followed by the the result of Lemma \ref{stab_infsup}:
\[
\|\zeta_h\|_{L} \lesssim \sup_{v_h \in V_h} \frac{b(\zeta_h,v_h)}{\|v_h\|_{V}} + \|\zeta_h - \pi_L \zeta_h\|_{L_h} \lesssim \sup_{v_h \in V_h} \frac{b(\zeta_h,v_h)}{\|v_h\|_{V}} + s(\zeta_h,\zeta_h)^\frac12.
\]
Since we already have the desired bound for the stabilisation term we
only need to consider the first term of the right hand side. By the Galerkin orthogonality
\eqref{gal_ortho_const}, with $\mu_h=0$ and the continuities of the
bilinear forms we have
\[
b(\zeta_h,v_h) = b(\lambda - \nu_h,v_h) + a(u - u_h,v_h) \lesssim (\|\lambda - \nu_h\|_{L}+\|u-u_h\|_{V}) \|v_h\|_{V}.
\]
We deduce the upper bound on $\|\zeta_h\|_{L}$,
\begin{equation}\label{zetabound}
\|\zeta_h\|_{L} \lesssim \|\lambda - \nu_h\|_{L}+ \|u-u_h\|_{V}+ s(\zeta_h,\zeta_h)^\frac12.
\end{equation}
This concludes the best approximation result.

To prove the existence and uniqueness of the discrete solution $u_h$ and $\lambda_h$ set $f=0$ and then prove that
$u_h=0$ and $\lambda_h=0$, implying that the system matrix is regular. Since
the continuous problem \eqref{const_cont} is well posed $u=0$ and $\lambda=0$,
but then by choosing $y_h=0$ and $\nu_h=0$ we have
\[
\| u_h\|_{V} + \| \lambda_h\|_{L} \lesssim \inf_{y_h \in V_h} \|v_h\|_{V}+\inf_{\nu_h \in \Lambda_h} \| \nu_h\|_{L} = 0.
\]
\end{proof}
\section{Stabilisation using jump penalty operators}\label{projpenal}
The design of the stabilisation operator $s(\cdot,\cdot)$ is
important, indeed if the construction of the projection $\pi_L$
requires a too detailed understanding of the inf-sup stable space $L_h$ the
advantages of the stabilised method may be lost. Typically this is the
case if $\pi_L$ is chosen to be the $L^2$-projection. Fortunately there are
some operators that can handle a relatively large set of spaces
$\{L_h,\Lambda_h\}$. The two natural choices are local projection
stabilisation
or interior penalty stabilisation. Herein we will only discuss the
second choice. For examples of local projection stabilisation methods
that can be used in this context we refer to \cite{BC11, Bu11b,BBH11b} where such methods have been proposed in a different
context. The extension to the present case is straightforward. Below
we will focus on the construction relevant for weak imposition of
boundary conditions. The extension to domain decomposition is
straightforward. We assume that $b(\cdot,\cdot)$ is defined by
\eqref{ex_a}.

We consider only one side $\Gamma$ of the polygonal boundary $\partial
\Omega$. Denote the trace mesh of $V_h$ by $\Gamma_{V}$.
Let the space $L_h$ be defined on a trace mesh $\Gamma_L$,
\[
L_h := \{ l_h \in L^2(\Gamma_L): l_h \vert_K \in P_k(K), \quad \forall
K \in \Gamma_L \},
\]
 and
$\Lambda_h$ on a trace mesh $\Gamma_\Lambda$,
\[
\Lambda_h := \{ \lambda_h \in L^2(\Gamma_\Lambda): \lambda_h \vert_K \in P_k(K), \quad \forall
K \in \Gamma_\Lambda \}.
\]

 The mesh function on
the trace meshes will be denoted  $h_{\Gamma,X}$, with $X = V,\, L$ or
$\Lambda$. We assume that there are positive constants $c_1$, $c_2$
and $c_3$ such
that
\[
h_{\Gamma,\Lambda}(x)  \leq c_1 h_{\Gamma,V}(x) \leq c_2 h_{\Gamma,L}(x) \leq c_3
h_{\Gamma,\Lambda}(x), \mbox{ for all } x \in \Gamma.
\]
We first note that using these spaces it is staightforward to design
$\pi_L$ so that \eqref{stab_cond_pil} holds, the only requirement is
orthogonality against constants on the elements of $\Gamma_L$. Indeed
if $\pi_L$ is chosen as the $L^2$-projection on $L_h$ it follows from
the definition of $\|\cdot\|_{L_h}$ that it can be replaced by any
interpolant in $L_h$ using the stability of the $L^2$-projection and
the quasi uniformity constraint on the mesh parameter
\[
\|\lambda_h - \pi_L \lambda_h\|^2_{L_h} \leq \sum_{K \in \Gamma_L} c_2
h_{\Gamma,L}\vert_K \|\lambda_h - \pi_L \lambda_h\|^2_{L^2(K)} =  \inf_{v_h \in L_h} \sum_{K \in \Gamma_L} c_2
h_{\Gamma,L}\vert_K  \|\lambda_h - v_h\|^2_{L^2(K)}.
\]
It follows that $v_h$ may be chosen as any interpolation of
$\lambda_h$ in $L_h$.
For imposition of boundary conditions and more generally for
domain decomposition methods the classical condition for inf-sup
stability is that $h_{\Gamma,L} > C h_{\Gamma,V}$ for some constant
$C>1$, (see \cite{Ba72}).
Using the projection stabilisation this
condition may be relaxed for the space $\Lambda_h$, since the stabilisation controls the
unstable modes. The relative difference in mesh size should be
accounted for in the stabilisation parameter to tune the constant of
the inf-sup condition. Numerical evidence however indicate that this
dependence is relatively weak. Assume for simplicity that $h_{\Gamma,\Lambda} <
h_{\Gamma,L}$. Let the interpolation operator $\tilde \pi_L:\Lambda_h \rightarrow L_h$ denote the quasi interpolation operator such
that for $u_h \in \Lambda_h$ 
\[
\tilde \pi_L u_h(x_i) := 
N_x^{-1} \sum_{\{K\in \Gamma_\Lambda: x_i \in  K\}} u_h(x_i) \vert_{K}, \mbox{
  for all nodes $x_i$ of $\Gamma_L$},
\]
where $N_x$ denotes the cardinality of the set $\{K\in \Gamma_{\Lambda}: x \in  K\}$.
Now consider any element in the trace mesh $K_\Gamma \in \Gamma_L$ and map it to a
reference element $\hat K_{\Gamma_L}$. Also map the subset 
$\Gamma'_\Lambda$ for which $\Gamma'_\Lambda :=\{ K': K' \cap
 K_\Gamma \ne \emptyset\}$ and denote the interior faces of
 $\hat \Gamma'_\Lambda$ by $\mathcal{F}'$. 
It then follows that
\[
\|\hat \lambda_h - \tilde \pi_L \hat \lambda_h\|_{L_h,\hat K_\Gamma}
\leq \sum_{\hat F \in  \mathcal{F}'}
\sum_{i=0}^k \int_{\hat F} \jump{\hat \partial_n^i \hat\lambda_h}^2
\mbox{d}\hat s,
\]
where $\jump{x}$ denotes the jump of the quantity $x$ over an element face,
with $\jump{x}=0$ for faces on the boundary and $\partial_n^i$ denotes the
normal derivative of order $i$, with $n$ the outward pointing normal
from the element $K'$ and with $\partial_n^0$, defined to be the
identity.

This upper bound on the reference element follows by the observation
that if the jump of $\hat \lambda_h$ and all its normal derivatives are
zero, then $\lambda_h$ is a polynomial over all of $\hat K_\Gamma$,
but since $\tilde \pi_L$ interpolates this polynomial $(\lambda_h -
\tilde \pi_L
\lambda_h)\vert_{\hat K_\Gamma} \equiv 0$. To show that $\hat \lambda_h$ is
a polynomial over all of $\hat K_\Gamma$ it is enough to consider one
face $\hat F$ and the associated elements such that $\hat F = \hat K_1 \cap
\hat K_2$. We choose the coordinate system so that $\hat F \subset \{ (\hat
x, \hat y): \hat y = 0\}$. We let $\hat p_i(\hat x,\hat y) = \hat
\lambda_h\vert_{\hat K_i}$ and
define the polynomial $\delta p_F(x,y) = \hat p_1 - \hat p_2$ on $\hat
K_1
\cup \hat K_2$. We must then show that 
$$
\left.
\begin{array}{r}
\delta p_F(\hat x,\hat y)\vert_{y=0} = 0, \, \forall
  \hat x \in \hat F\\[3mm]
\partial_{\hat y}^i \delta
  p_F(\hat x,\hat y)\vert_{\hat y=0} = 0, 
\, i= 1,...,k,\, \forall
  \hat x \in \hat F \end{array} \right\} \longrightarrow \delta
  p_F(\hat x,\hat y) \equiv 0.
$$
This is straightforward by noting that a polynomial of order $k$ has
$(k+1)(k+2)/2$ degrees of freedom and that $\delta p_F(\hat x,\hat y)\vert_{y=0}
= 0$ implies $k+1$ independent equations and that each $i$ $\partial_{\hat y}^i \delta
  p_F(\hat x,\hat y)\vert_{y=0} = 0$ gives $k-i+1$ independent
  equations. Summing up the independent equations we get
\[
k+1 + \sum_{i=1}^{k} (k-i+1) = \frac{(k+1)(k+2)}{2}
\]
and we conclude that $\delta p_F(\hat x,\hat y) \equiv 0$. It follows
that $\hat \lambda_h$ is defined by one polynomial over $\hat K_1 \cup
\hat K_2$. The
result on $\hat K_\Gamma$ is obtained by repeating the argument for
all faces $\hat F \in \mathcal{F}'$.

After scaling back to physical space and summing over all elements in
$\Gamma_L$ we obtain, if $\mathcal{F}_{\Lambda}$ denotes the set of
interior faces in $\Gamma_{\Lambda}$,
\[
\| \lambda_h - \tilde \pi_L \lambda_h\|^2_{L_h} \leq \sum_{ F \in \mathcal{F}_{\Lambda}}
\sum_{i=0}^k \int_{F} h^{s_0+2i} \jump{\partial^i_n \lambda_h}^2\,
\mbox{d} s.
\]
 The order $s_0$ depends on $L_h$ and follows from the scaling
 argument, in our example where the $L_h$-norm is the $h^{\frac12}$-weighted
 $L^2$-norm over $\Gamma$ we have $s_0
 = 2$. We conclude that the interior penalty stabilisation
 operator may be written
\[
s(\lambda_h,\mu_h) := \sum_{ F \in \mathcal{F}_{\Lambda}}
\sum_{i=0}^k \int_{F} h^{s_0+2i} \jump{\partial^i_n \lambda_h}\jump{\partial^i_n \mu_h}\,
\mbox{d} s.
\]

It may be inconvenient to compute all normal
derivatives up to polynomial order and an equivalent local projection approach may
be used instead as suggested in the references given above. Observe
that above we have assumed that $\Lambda_h$ and $L_h$ have the same
polynomial everywhere in the domain. If this is not the case the
analysis has to be modified accordingly.
\section{Penalty stabilisation of Lagrange multiplier formulations:
  applications} \label{sec:applications}
As an example of the above theory we recall a stabilised method introduced as a
fictitious domain method in \cite{BH10b} and using the results of \cite{GG95}
for the underlying stable
spaces. Here we will first present
the method in the simple case of weak imposition of boundary condition
and then propose an extension to unfitted finite element methods.
Both cases are considered in two space dimensions, but the extension
to three dimensions is straightforward.
\subsection{Weak imposition of boundary conditions}

In this section we will consider the problem \eqref{Poisson}, that we recall
here for the readers convenience.

Let $\Omega$ be a bounded domain in $\IR^2$, with polygonal
boundary $\partial \Omega$.
The Poisson equation that we propose as a model problem is given
by
\begin{equation}\label{strongPoisson}
\begin{array}{rcl}
-\Delta u &=& f\quad \mbox{ in } \Omega,\\
u&=&g \quad \mbox{ on } \partial \Omega,
\end{array}
\end{equation}
where $\partial \Omega$ denotes the boundary of the domain $\Omega$, $f\in
L^2(\Omega)$ and $g\in H^{\frac12}(\partial \Omega)$. Under these
assumptions \eqref{strongPoisson} has a unique solution $u\in
H^1(\Omega)$ satisfying $\|u\|_{H^1(\Omega)} \lesssim
\|f\|_{L^2(\Omega)}$. As already suggested we define $V:= H^1(\Omega)$ and
$L:=H^{-\frac12}(\partial \Omega)$.

The usual $L^2$-scalar product on the domain $\Omega$ will be denoted
by $(\cdot,\cdot)_{\Omega}$ or on the boundary
$\left<\cdot,\cdot\right>_{\partial \Omega}$. We also introduce the discrete norms 
$$\|\lambda\|^2_{\frac12,h,\partial \Omega} = \left<h^{-1}\lambda,\lambda\right>_{\partial \Omega}
,\quad \|\lambda\|^2_{-\frac12,h,\partial \Omega} =
\left<h\lambda,\lambda\right>_{\partial \Omega}$$
and
$$
\|u\|^2_{1,h}:= \|\nabla u\|^2_{L^2(\Omega)} + \|u\|^2_{\frac12,h,\partial \Omega}.
$$
 Recall that $\forall \lambda,\mu \in L^2(\partial \Omega)$ there
holds 
\begin{equation}\label{cauchyschwarz}
\left<\lambda,\mu\right>_{\partial \Omega}\leq \|\lambda\|_{-\frac12,h,\partial \Omega} \|\mu\|_{\frac12,h,\partial \Omega}.
\end{equation}
The weak formulation of the problem is given by \eqref{weak_Poisson} with $a(\cdot,\cdot)$ defined by \eqref{ex_a} and
$b(\cdot,\cdot)$ by \eqref{ex_b}. 

\subsubsection{Finite element formulation}
\label{S:FEM}
We introduce a triangulation $\mathcal{T}_h$, fitted to the boundary of $\Omega$.
The set of faces of triangles that form the boundary $\partial \Omega$
of $\Omega$ is denoted $\mathcal{F}$.

We will use the following notation for mesh related quantities. 
Let $h_K$ be the diameter of $K$ and $h = \max_{K \in \mathcal{T}_h} h_K$.
We introduce the finite element spaces
\[
V_h := \{v \in H^1(\Omega): v\vert_K \in P_1(\Omega),\,\forall K \in \mathcal{T}_h \}
\]
and
\[
\Lambda_h := \{\mu \in L^2(\partial \Omega): \mu \vert_F \in P_0(F),
\,\forall F \in \mathcal{F}\}.
\]
It is known that this choice of spaces does not satisfy
\eqref{brezzi_disc}.

The standard finite element formulation writes: find $u_h \in V_h$ and $\lambda_h \in \Lambda_h$ such that
\begin{equation}\label{standard_fem}
a(u_h,v_h) + b(\lambda_h,v_h) + b(\mu_h,u_h) = (f,v_h)+b(\mu_h,g)
 \quad \mbox{ for all } (v_h,\mu_h) \in V_h \times \Lambda_h.
\end{equation} 
Assume that $L_h$ denotes a coarsened version of $\Lambda_h$, $L_h \subset \Lambda_h$ such that
the inf-sup condition is uniformly satisfied for the pair $V_h \times
L_h$, we now that this is always possible if $L_h$ is chosen coarse
enough, i.e. if $H$ denotes the mesh size of $L_h$ thereholds $H> c
h$, for some $c>1$ and we assume that there exists a positive constant
$c_H$ such that $c_H H \leq h$. We let $\pi_L$ denote the $L^2$-projection on the
space $L_h$. As proposed in the previous section we may
stabilise the formulation \eqref{standard_fem} by adding the penalty term
\[
s(\lambda_h,\mu_h) = \left<h(\lambda_h -  \pi_{L} \lambda_h),
  \mu_h - \pi_L \mu_h\right>_{\partial \Omega}.
\]
Clearly the space $\Lambda_h$ is more convenient to work in since it
does not require any special meshing of the boundary.
If we now let $\mathcal{X}:=\{x_i\}$ be the set of all the mesh nodes in $\partial
\Omega$ excluding corner nodes.
Then there holds, by the arguments of Section \ref{projpenal}
\[
\|\lambda_h - \pi_L \lambda_h\|^2_{-\frac12,h,\partial \Omega} \leq c \sum_{x_j \in \mathcal{X}} h^2 \jump{\lambda_h}|^2_{x_j}.
\]
This prompts the stabilisation operator
\[
s(\lambda_h,\mu_h) :=  \sum_{x_j \in \mathcal{X}} h^2 \jump{\lambda_h}|_{x_j}\jump{\mu_h}|_{x_j}.
\]
Observe that penalising the jump of $\lambda_h$ over a corner node
leads to an inconsistent method even for smooth $u$, since $\lambda$
will jump across the corner due to the jump in the boundary normal.
The stabilised method the reads: find $u_h \in V_h$ and $\lambda_h \in \Lambda_h$ such that
\begin{multline}\label{stab2_fem}
a(u_h,v_h) + b(\lambda_h,v_h) + b(\mu_h,u_h)- \gamma s(\lambda_h,\mu_h) \\
= (f,v_h)+b(\mu_h,g) \quad \mbox{ for all } (v_h,\mu_h) \in V_h \times \Lambda_h.
\end{multline} 

We will outline the analysis of the penalty stabilised Lagrange
multiplier method using the abstract framework derived in Section
\ref{abstractset}
\subsubsection{Satisfaction of the assumptions of the abstract analysis}
We may now use the abstract analysis of Theorem \ref{best_approx_stab} combined with
Lemma \ref{stab_infsup} to prove a best approximation result. We will
use the discrete norms
\[
\|u_h\|_{V_h}= \|u_h\|_{1,h}, \quad \|\lambda_h\|_{L_h} := \|\lambda_h\|_{-\frac12,h,\partial \Omega}.
\]
By assumption $L_h$ satisfies the inf-sup condition
\eqref{brezzi_disc}, for $\pi_L$ defined as the $L^2$-projection on
the piecewise constants \eqref{stab_cond_pil} holds and hence we have the stabilised inf-sup condition \eqref{stab_infsup}.
It is easy to see that the continuities \eqref{acontdisc} and \eqref{bcontdisc} hold.
The condition \eqref{coercivity_assumption} also holds by taking
$\xi_h(u_h) := \delta h^{-1} \pi_L u_h$, where $\delta \in \mathbb{R}^+$.
The satisfaction of \eqref{coercivity_assumption} now follows from the
construction of $s(\cdot,\cdot)$, the quasi-uniformity between $H$ and
$h$, the stability of the $L^2$-projection and the definition of the
$L_h$ and $V_h$ norms,
\begin{equation}\label{coercivity_satisfaction}
c_0 s(\xi_h(u_h),\xi_h(u_h))^{\frac12} \leq\| h^{-1} \pi_L u_h\|_{L_h} \leq \|u_h\|_{\frac12,h,\partial \Omega} \leq
\|u_h\|_{V_h}.
\end{equation}
The second relation of \eqref{coercivity_assumption} is satisfied
using  the approximation property of the projection $\pi_L$ 
\[
\|u_h -
\pi_L u_h\|_{\frac12,h,\partial \Omega} \leq c_0 \|\nabla u_h \times n_{\partial \Omega}\|_{-\frac12,h,\partial \Omega}
\]
and a discrete trace inequality $\|\nabla u_h \times n_{\partial \Omega}\|_{-\frac12,h,\partial \Omega}
\leq c_t \|\nabla u_h\|_{L^2(\Omega)}$, leading to
\begin{equation}
\|u_h\|_{\frac12,h,\partial \Omega}^2 \leq 2 \|u_h -
\pi_L u_h\|_{\frac12,h,\partial \Omega}^2 + 2\|\pi_L u_h\|_{\frac12,h,\partial \Omega}^2 \leq
2\|\pi_L u_h\|_{\frac12,h,\partial \Omega}^2 + 2c^2_0 c^2_t \|\nabla u_h\|_{L^2(\Omega)}^2.
\end{equation}
It follows that 
\[
a(u_h,u_h) +  b(\xi_h(u_h),u_h) \ge  (1 - c^2_0 c^2_t \delta)  \|\nabla
u_h\|_{L^2(\Omega)}^2+ \frac{\delta}{2} \|u_h\|_{\frac12,h,\partial \Omega}^2
\]
and hence for $\delta <  c^{-2}_0 c^{-2}_t$ the coercivity assumption is
satisfied.

We conclude that the assumptions of Theorem \ref{best_approx_stab} are satisfied and
that the formulation \eqref{stab2_fem} is wellposed and satisfies a
best approximation result.
\subsection{Unfitted finite element methods and multi-model
  coupling}\label{unfitted}
Here we will consider the coupling of two models of elasticity over a smooth
interface that is not fitted to the computational mesh. This type of
method
can be useful for problems where the interface itself is an unknown
and repeated computations have to be performed with different
interface positions, for instance for transient problems where an interface moves through
the mesh or for inverse identification where the interface will move
during iterations.

We consider a geometrical setting where a polygonal $\Omega$ is
decomposed in two
subdomains, $\Omega_1$ and $\Omega_2$ and a separating interface
$\Gamma$. In each subdomain $\Omega_i$ we consider the following 
partial differential equation:
\[
\nabla \cdot  \sigma_i(u_i) = f,\quad \mbox{ in } \Omega_i
\]
where $u_i \in V_i := [H^1(\Omega_i)]^2$ denotes a displacement field,
$\sigma_i(u_i) \in [H(\mbox{div};\Omega_i)]^2$ the stress tensor and
$f \in L^2(\Omega)$ the applied force. Across
the interface we assume that the following matching conditions hold
\[
u_1 - u_2 =0,\quad (\sigma_1(u_1) - \sigma_2(u_2)) \cdot n_\Gamma = 0.
\] 
For simplicity we assume that $u=0$ on the outer boundary $\partial
\Omega$. Let $$V:= \{(v_1,v_2) \in  V_1 \times V_2 : v_i\vert_{\partial
  \Omega_i \cap \partial \Omega} = 0 \}$$ and $L$ be the dual space to
the space of traces of $V$ on $\Gamma$.
We propose the following norm on $V$:
\begin{equation}\label{multi_phys_norm}
\|u\|_V := \sum_{i=1}^2 \|\nabla u\|_{\Omega_i} + \|u_1 - u_2\|_{\frac12,\Gamma}.
\end{equation}
We assume that the following coercivity and continuity properties hold for the continuous
problem. There exists positive constants $\alpha_0,\, \alpha_1, M$
such that
\begin{equation}\label{elast_korn}
\alpha_0 \sum_i  \|\nabla u_i\|_{\Omega_i} ^2 \leq \sum_{i=1}^2
\left((\sigma_i(u_i),\nabla u_i)_{\Omega_i} + \|u_i\|_{\partial
    \Omega_i \cap \partial \Omega}^2\right) + \|u_1 - u_2\|_\Gamma^2,\quad
\forall (u_1,u_2) \in V_1 \times V_2, 
\end{equation}
\begin{equation}\label{elast_coerce}
\alpha_1 \|u\|_V^2 \leq \sum_{i=1}^2 (\sigma_i(u_i),\nabla u_i)_{\Omega_i},\quad
\forall u \in \{v \in V: \left<\nu,v_1 - v_2\right>_\Gamma
=0,\,\forall \nu \in L\}, 
\end{equation}
\begin{equation}\label{elast_cont}
|\sum_{i=1}^2 (\sigma_i(u_i),\nabla
v_i)_{\Omega_i}| \leq M \|u\|_V \|v\|_V ,\quad \forall u,v \in V.
\end{equation}
Note that \eqref{elast_korn} typically implies a Korn's inequality and
that \eqref{elast_coerce} is a consequence of \eqref{elast_korn}, the
boundary and interface conditions and the Poincar\'e inequality.
We propose a weak formulation using
Lagrange multipliers that takes the form, find $(u,\lambda) \in
V\times L$ such that
\begin{equation}\label{weak_multiphys}
a(u,v) + b(\lambda,v) + b(\mu,u) = (f,v)
 \quad \mbox{ for all } (v,\mu) \in V \times L,
\end{equation} 
where
\begin{equation}\label{bilinear_mulitphys}
a(u,v) := \sum_{i=1}^2 (\sigma_i(u_{i}),\nabla v_i)_{\Omega_i},\quad 
b(\lambda,v) = \left<\lambda,(v_{1} - v_{2})\right>_{\Gamma}.
\end{equation}
Note that the continuity $b(\lambda,v) \leq M_b \|\lambda\|_L \|v\|_V$
holds.
If in addition to \eqref{elast_korn}, \eqref{elast_coerce} and the
above continuities we assume that $\sigma_i(u_i)$ are linear, this formulation is wellposed by the Babuska-Brezzi Theorem (see
\cite{Ba72,Bre74}). Observe that there are some differences in the
functional analytical framework depending on whether or not $\Gamma$
intersects the Dirichlet boundary. These differences are irrelevant
for the present discussion and will be neglected.
\subsubsection{Finite element formulation}
Consider the mesh family $\{\mathcal{T}_h\}_h$ where we let $\mathcal{T}_h:= \{K\}$ be a triangulation of $\Omega$ that is
constructed
without fitting the element nodes or sides to the interface
$\Gamma$. For any $\mathcal{T}_h$ we now extract two subtriangulations,  $\mathcal{T}_i:= \{K
\in \mathcal{T}_h: K \cap \Omega_i \ne \emptyset\}$, i=1,2. We
define two finite element spaces, one for $\Omega_1$ and one for
$\Omega_2$ by
\[
V_{ih} := \{v \in V_i: v\vert_K \in [P_1(K)]^2,\, \forall K
\in \mathcal{T}_i \mbox{
  and } v\vert_{\partial \Omega \cap \mathcal{T}_i} = 0\}.
\]
 Let $\tilde G_h:= \{K \in
 \mathcal{T}_h: K \cap \Gamma \ne \emptyset \}$. We assume that the
 mesh is fine enough so that, for all $K \in \tilde G_h$, $\Gamma \cap K$ can be approximated by a
 line segment, i.e. that $\Gamma$ intersects the boundary of $K$ in
two points and that there exists $c>0$ so that $\mbox{meas}(\Gamma
 \cap K) < ch$ for all elements and all meshes.

Observe that the finite element functions extend to all of the mesh
domain $\mathcal{T}_i$ which can lead to conditioning problems if
there are elements in $\tilde G_h$ with very small intersection with
the physical domain. On the set $\tilde G_h$ we define the following multiplier space
\[
\Lambda_h := \{\lambda_h \in [L^2(\tilde G_h)]^2: \lambda_h\vert_K \in
[P_0(K)]^2,\, \forall K \in \tilde G_h \}.
\]
The Lagrange multiplier is defined on the same elements
as the primal variables and hence has been extended in space, the
advantage of this is that the stabilisation of the multiplier can
be designed on the standard volume elements (here in $\mathcal{R}^2$) and we do not need to
consider a trace mesh of $\Gamma$.

The finite element method once again is on the generic form
find $u_h:= \{u_{h1},u_{h2} \} \in V_{1h} \times V_{2h} =: V_h$ and $\lambda_h \in \Lambda_h$ such that
\begin{multline}\label{stab_fem_gen}
a(u_h,v_h) + b(\lambda_h,v_h) + b(\mu_h,u_h) - s(\lambda_h,\mu_h) = (f,v_h)\\
 \quad \mbox{ for all } (v_h,\mu_h) \in V_h \times \Lambda_h,
\end{multline} 
where the bilinear forms $a(\cdot,\cdot)$ and $b(\cdot,\cdot)$ are
defined by \eqref{bilinear_mulitphys} and $s(\cdot,\cdot)$ will be
detailed below.
We know that if we instead looked for $\lambda_h$ in a space $L_h$ defined on macro
elements with diameter $H$ such that $H> c_h h$, with $c_h$
sufficiently large the inf-sup condition would be satisfied. We also assume that
there exists $c_H>0$ so that $c_H H \leq h$. We assume that the
space $L_h$ is constructed by assembling elements in $\tilde G_h$ into
macro patches $F_j$ such that for every $j$ $H \leq \mbox{meas}(F_j \cap
\Gamma) \leq H+h$. By the constraints on the mesh with respect to the
interface we may conclude that the cardinality of the set $\{K: K\cap
F_j \ne \emptyset\}$ is upper
bounded uniformly in $j$ and $h$  by some $M_F \in \mathbb{N}^+$. To
each boundary patch $F_j$ we associate a shape regular macro patch
$\omega^i_j \subset \Omega_i$ consisting $F_j \cap \Omega_i$ and a sufficient number of interior
elements $K \subset \mathcal{T}_{ih} \cap \Omega_i$ so that $\mbox{meas}(\omega^i_j \cap \Omega_i)
= O(H^2)$. It follows by construction that $\bar \omega^1_j \cap \bar \omega^2_j
= F_j$ and we assume that for fixed $i$, the interiors of the patches $\omega^i_j$ are
disjoint. The rationale for the patches $\omega^i_j$ is that for all
$u_j \in H^1(\omega^i_j)$ the following trace inequality holds
\begin{equation}\label{trace_multiphys}
H^{-\frac12} \|u_j - \pi_L u_j\|_{\Gamma \cap \omega^i_j} \leq c_P
\|\nabla u_j\|_{\omega^i_j}
\end{equation}
where $\pi_L$ denotes the projection onto piecewise constant functions
on $F_j$ and $c_P$ is independent of the mesh interface intersection. This
inequality is proven by mapping to a reference patch $\hat \omega$,
there applying a trace inequality followed by a Poincar\'e type
inequality (see Corollary B.65 of
\cite{EG04}) and then mapping back to the
physical patch $\omega^i_j$, using the shape regularity of the patch
for uniformity.  For completeness we sketch a proof of the construction of the Fortin interpolant in
appendix. Observe that using the
stable pair $V_h \times L_h$ and taking $s(\cdot,\cdot)=0$ then leads to a
best approximation for the inf-sup stable unfitted finite element
method using Theorem \ref{best_approx_stab}.

As before we get the abstract stabilisation operator
\begin{equation}\label{abst_stab}
s(\lambda_h,\mu_h) := \left<h (\lambda_h - \pi_L \lambda_h), \mu_h - \pi_L \mu_h\right>_\Gamma.
\end{equation}
In practice, since we do not want to be concerned with the construction
of $L_h$ we apply the ideas of section \ref{projpenal} and instead work with the operator
\begin{equation}\label{pract_stab}
s(\lambda_h,\mu_h) := \sum_{K\in \tilde G_h} \int_{\partial K
  \setminus \partial \tilde  G_h} h \jump{\lambda_h}\cdot \jump{\mu_h} ~\mbox{d}s,
\end{equation}
where $\jump{x}$ denotes the jump of the quantity $x$ over the
interior faces of the elements in the set $\tilde G_h$.
\begin{remark}
Note that although the operator of \eqref{abst_stab} is defined on
$\Gamma$ the operator \eqref{pract_stab} is defined on the interior
faces of elements in $G_h$. This convenient trick introduced in
\cite{BH10b}, allows us to use the volume mesh structure for stabilisation and we
never need to worry about the actual intersections of $\Gamma$ with
element boundaries. Uniformity of the stabilisation relies on the mesh
regularity.
\end{remark}
\subsubsection{Satisfaction of the assumptions of the abstract
  analysis}
For the method to be robust with respect to the mesh-interface
intersection the constants in the bounds in the above abstract
analysis must all be independent of the cut. This holds
for the approximation using the inf-sup stable space $V_h \times L_h$,
thanks to the robustness of the Fortin interpolant and the properties
of $a(\cdot,\cdot)$ and $b(\cdot,\cdot)$.
Therefore we only show that the inequalities
\eqref{coercivity_assumption}
also can be made independent of the cut, under the above assumptions.
Similarly as in the case of weak boundary condition we introduce the
following norms on the discrete spaces
$$\|\lambda_h\|^2_{\frac12,h,\Gamma} = \left<h^{-1}\lambda_h,\lambda_h\right>_{\Gamma}
,\quad \|\lambda_h\|^2_{-\frac12,h,\Gamma} =
\left<h\lambda_h,\lambda_h\right>_{\Gamma},$$
$$
\|u_h\|^2_{1,h}:=\sum_{i=1}^2 \|\nabla u_{ih}\|^2_{L^2(\Omega_i)} +
 \|u_{1h}-u_{2h}\|_{\frac12,h,\Gamma}^2.
$$
To prove that the hypothesis of Theorem \ref{best_approx_stab}
are satisfied we chose the norms $\|\cdot \|_{V_h}$ and $\|\cdot
\|_{L_h}$ as follows,
\[
\|u \|_{V_h}:=\|u\|_{1,h},\quad \|\lambda\|_{L_h}:=\|\lambda\|_{-\frac12,h,\Gamma}.
\]
To satisfy the coercivity condition of \eqref{coercivity_assumption}
we take
$\xi_h(u_h)\vert_{F_i} := \delta H^{-1} \pi_L (u_1-u_2)$. 
We recall that $\pi_L$ is defined by the
projection on the space $L_h$ with mesh size $H$, $$
\pi_L(u_{1h}-u_{2h})\vert_{F_i} := |F_i\cap \Gamma|^{-1} \int_{F_i \cap \Gamma}  (u_{1h}-u_{2h})
~\mbox{d}s.$$
By this choice,
using the orthogonality of $\pi_L$ we have
\begin{multline*}
 b(\xi_h(u_h),u_h) = \delta \sum_i \left< H^{-1} \pi_L (u_{1h}-u_{2h}),
   \pi_L (u_{1h}-u_{2h}) \right>_{F_i \cap \Gamma}\\
=  \delta \sum_j \|H^{-\frac12} (\pi_L - I) (u_{1h}-u_{2h}),
  \|^2_{F_j \cap \Gamma} + \delta \sum_j \| H^{-\frac12} (u_{1h}-u_{2h}),
 \|^2_{F_j \cap \Gamma}\\
\ge -2 \delta \sum_{i=1}^2 \sum_j \| H^{-\frac12}  (\pi_L - I)  u_{ih}\|^2_{\omega^i_j \cap \Gamma} + \delta\sum_j \| H^{-\frac12} (u_{1h}-u_{2h})
 \|^2_{F_j \cap \Gamma}.
\end{multline*}
Applying \eqref{trace_multiphys} in the right hand side of the last
inequality It then follows that 
\[
 a(u_h,u_h) +  b(\xi_h(u_h),u_h) \ge \sum_{i=1}^2
 (\sigma_i(u_{ih}),\nabla u_{ih})_{\Omega_i} - 2 \delta c_P^2 \sum_{i=1}^2
 \|\nabla u_{ih}\|^2_{\Omega_i} + 
 \delta c_H \|u_{1h}-u_{2h}\|_{\frac12,h,\Gamma}^2.
\]
We then apply \eqref{elast_korn} in the right hand side, recalling
that $u_{ih} \in V_i$, $i=1,2$ to obtain
\[
 a(u_h,u_h) +  b(\xi_h(u_h),u_h) \ge (\alpha_0- 2 \delta c_P^2 ) \sum_{i=1}^2
 \|\nabla u_{ih}\|^2_{\Omega_i} + (\delta c_H - h) \|u_{1h}-u_{2h}\|_{\frac12,h,\Gamma}^2
\]
and we conclude by choosing $\delta = \tfrac{\alpha_0}{4 c_P^2}$ and
taking $h < \delta c_H$.

For the second inequality of \eqref{coercivity_assumption} observe
that by the fact that an interface segment $F_j$ can only
be cut by a uniformly upper bounded number of elements, the mesh
condition, $c_h h \leq H
\leq c_H^{-1} h$, and that the $\xi_h(u_h)$ are
constant over each macro patch $F_j$ we have
\begin{multline*}
\sum_{K\in \tilde G_h} \int_{\partial K
  \setminus \partial \tilde  G_h} h \jump{\xi_h(u_h)}^2~\mbox{d}x \lesssim
\sum_{K\in \tilde G_h} h^2 |\xi_h(u_h)\vert_{K}|^2\\ \lesssim M_F \sum_j h^2
|\xi_h(u_h)\vert_{F_j}|^2 \lesssim \|\xi_h(u_h)\|_{L_h}^2.
\end{multline*}
Then using the stability of the $L^2$-projection and the mesh
conditions linking $h$ and $H$ we conclude
\[
\|\xi_h(u_h)\|_{L_h} = \delta \|H^{-1} \pi_L (u_{1h}-u_{2h})\|_{-\frac12,h,\Gamma}
\lesssim \|\pi_L (u_{1h}-u_{2h})\|_{\frac12,h,\Gamma} \lesssim \|u_h\|_{V_h}.
\]

We conclude that the results of Theorem \ref{best_approx_stab} hold in
this case as well.
\begin{remark}
 By using
suitable extensions of the solution following 
\cite{BH10a} and \cite{HH04} optimal convergence may be obtained for
smooth solutions. The conditioning of the system however
depends on how the interface cuts the mesh and must be handled either
following the ideas introduced in \cite{Bu11b} or by preconditioning.
\end{remark}
\subsection{Nitsche's method and stabilised Lagrange multiplier methods: a
different approach}
The close relation between the residual based stabilised methods for
Lagrange multipliers as introduced by Barbosa and Hughes and Nitsche's
method
was discussed by Stenberg in \cite{Sten95}. The idea of that paper was that
if the Lagrange multiplier can be eliminated locally by solving the
constraint equation, Nitsche's method is recovered. Other authors have recently
discussed the need of penalty for Nitsche's method and its close
relation to Lagrange multiplier methods, see for instance
\cite{Bu11a,DBDV10, Gro10}.

Herein we will show the
connection between the non-symmetric variant of Nitsche's method, the projection stabilised methods discussed above
and the residual based stabilisation of the Lagrange multiplier. Let
us first recall the nonsymmetric version of the method of Barbosa \& Hughes: find $\{u_h,\lambda_h\}
\in V_h \times \Lambda_h$ such that
\begin{multline}\label{BB_stab}
A_{BH}[(u_h,\lambda_h),(v_h,\mu_h)]\\
:=a(u_h,v_h)+b(\lambda_h,v_h)-b(u_h,\mu_h)+\gamma \left<h(\lambda_h + \nabla u_h \cdot n),
\mu_h + \nabla v_h \cdot n\right>_{\partial \Omega} \\ = (f,v_h), \, \forall \{v_h,\mu_h\} \in V_h \times \Lambda_h,
\end{multline}
with $a(\cdot,\cdot)$ and $b(\cdot,\cdot)$ are defined by \eqref{ex_a}
and \eqref{ex_b}, corresponding to the weak imposition of boundary conditions.
Recalling that formally the Lagrange multiplier is given by the
diffusive flux $\lambda = -\nabla u \cdot n$, we immediately conclude
that the method is consistent. Stability is then typically proven by
testing with $v_h=u_h$ and $\mu_h = \lambda_h$ using the positivity of
the form to obtain control of $\|h^\frac12 \lambda_h\|_{\partial
  \Omega}$ by absorbing all the other terms in the stabilisation using
the $H^1$-seminorm of $u_h$ over the domain. Control of $u_h$ on the
boundary is then obtained in a second step by choosing $\mu_h$ suitably.

We will now consider the stabilisation used in \eqref{BB_stab} as a
penalty on the distance to a stable subspace. This would mean using the space $N_h$ of normal derivatives of $V_h$ on the trace
mesh as multiplier space, together with $V_h$ for the primal variabel. Since in that
case $V_h$ and $N_h$ no longer can be chosen
independently this method may be written: find $u_h \in V_h$ such that
\begin{multline}\label{non_sym_Nitsche}
A_{Nit}(u_h,v_h)
:=a(u_h,v_h)+b(-\nabla u_h\cdot n,v_h)-b(u_h,-\nabla v_h \cdot n)\\=
(f,v_h), \, \forall v_h \in V_h.
\end{multline}
We have eliminated the Lagrange multiplier in the formulation using
its equivalence with the diffusive flux. Writing out this variational
formulation leads to
\[
\int_\Omega \nabla u_h \cdot \nabla v_h ~\mbox{d} x - \int_{\partial
  \Omega} \nabla u_h \cdot n v_h  ~\mbox{d} s + \int_{\partial
  \Omega} \nabla v_h \cdot n u_h ~\mbox{d} s = \int_\Omega f v ~ \mbox{d}x,
\]
which we identify as the non-symmetric version of Nitsche's method,
without penalty. For the argument to make sense we now need a
stability result for this method. The question of the inf-sup stability of the non-symmetric version of Nitsche's method,
without penalty,
was recently treated in \cite{Bu11a}, where the following stability
result was proven:
\begin{lemma}\label{stab_non_sym_Nit}
Let $V_h$ be the standard space of piecewise polynomial continuous
finite element functions.
Assume that the each face of the polygonal $\Omega$ is mesh with a sufficient
number of elements (depending only on the shape regularity), then for some $\zeta \ge c_0 >0 $, with $c_0$
independent of $h$, but not of the mesh geometry, there holds
\[
\|u_h\|_{1,h} \lesssim \sup_{v_h \in V_h} \frac{A_{Nit}(u_h,v_h)}{\|v_h\|_{1,h}},
\]
where 
\[
\|u_h\|_{1,h}^2 := \|\nabla u_h\|_{L^2(\Omega)}^2 + \zeta \|h^{-\frac12}
u_h\|_{L^2(\partial \Omega)}^2,\quad \zeta>0.
\]
\end{lemma}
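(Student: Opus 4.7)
The plan is to establish the discrete inf-sup condition by constructing a good test function. The antisymmetric placement of the boundary terms in $A_{Nit}$ gives immediately, upon testing with $v_h = u_h$, that $A_{Nit}(u_h,u_h) = \|\nabla u_h\|_{L^2(\Omega)}^2$, which controls the bulk part of $\|u_h\|_{1,h}$ but says nothing about the weighted boundary norm $\zeta\|h^{-\frac12}u_h\|_{L^2(\partial\Omega)}^2$. The task is therefore to recover this missing term by a suitable perturbation of the test function.

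First I would construct, for every $u_h \in V_h$, a function $w_h = w_h(u_h) \in V_h$ satisfying: (i) $w_h|_{\partial\Omega} = 0$, (ii) $\nabla w_h \cdot n|_{\partial\Omega}$ approximates $-h^{-1} u_h$ on each boundary face in an elementwise sense so that $\langle -\nabla w_h \cdot n, u_h\rangle_{\partial\Omega} \gtrsim \|h^{-\frac12}u_h\|_{L^2(\partial\Omega)}^2$, and (iii) $\|\nabla w_h\|_{L^2(\Omega)} \lesssim \|h^{-\frac12}u_h\|_{L^2(\partial\Omega)}$. For continuous piecewise $P_k$ elements this is done locally on each boundary-adjacent element: the interior degrees of freedom (in the $P_1$ case, the value at the vertex opposite the boundary face) are prescribed so as to realise the desired normal derivative, while the boundary trace is forced to zero. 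The geometric hypothesis that each face of $\partial\Omega$ is meshed with sufficiently many elements is exactly what is needed here: it guarantees that the local prescriptions on neighbouring boundary faces do not interfere through shared degrees of freedom, so that the construction can be done globally with bounds independent of $h$.

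Next I would test \eqref{non_sym_Nitsche} with $v_h = u_h + \zeta w_h$. Expanding,
\[
A_{Nit}(u_h,v_h) = \|\nabla u_h\|_{L^2(\Omega)}^2 + \zeta(\nabla u_h,\nabla w_h)_\Omega - \zeta\langle\nabla u_h\cdot n, w_h\rangle_{\partial\Omega} + \zeta\langle\nabla w_h\cdot n, u_h\rangle_{\partial\Omega}.
\]
The third term vanishes by (i); the fourth term is bounded below by $c\zeta\|h^{-\frac12}u_h\|_{L^2(\partial\Omega)}^2$ by (ii); the cross term is handled by Cauchy--Schwarz together with (iii) and an arithmetic--geometric inequality, with the $\|\nabla u_h\|^2$ portion absorbed into the coercive term and the boundary part absorbed by choosing $\zeta$ small but bounded below by a constant $c_0$ independent of $h$. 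This gives $A_{Nit}(u_h,v_h) \gtrsim \|u_h\|_{1,h}^2$.

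Finally, continuity of the test function follows from (i), which leaves the boundary term of $\|v_h\|_{1,h}$ equal to that of $\|u_h\|_{1,h}$, and from (iii) which controls $\|\nabla w_h\|_{L^2(\Omega)}$ by $\|u_h\|_{1,h}$. Hence $\|v_h\|_{1,h} \lesssim \|u_h\|_{1,h}$, and dividing yields the claimed inf-sup estimate. The main obstacle is the construction and uniform bounding of $w_h$; the rest is bookkeeping. The dependence of $c_0$ on the mesh geometry reflects precisely the geometric assumption on the boundary discretisation, which cannot be avoided because the mechanism for recovering the boundary norm is entirely local to the boundary layer of elements.
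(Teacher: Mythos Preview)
The paper does not itself prove this lemma; it is quoted from \cite{Bu11a}. The remark immediately following the statement records the structure of the argument: the test function is $v_h=u_h+\zeta\varphi_\partial$ with $\|\varphi_\partial\|_{1,h}\le c_\partial\|u_h\|_{\frac12,h,\partial\Omega}$, and $\varphi_\partial$ is what recovers the boundary control. Your plan follows exactly this template, so the overall strategy matches what the paper relies on.

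There is, however, a gap in your item (ii). If $w_h$ vanishes on $\partial\Omega$ and the space is $P_1$, then on each boundary face $F$ the normal derivative $\nabla w_h\cdot n$ is a single constant (one interior vertex per boundary triangle provides one free parameter). Consequently $\langle\nabla w_h\cdot n,u_h\rangle_F=(\nabla w_h\cdot n)|_F\int_F u_h$ vanishes whenever $u_h$ has zero mean on $F$, so the lower bound $\langle\nabla w_h\cdot n,u_h\rangle_{\partial\Omega}\gtrsim\|h^{-\frac12}u_h\|_{L^2(\partial\Omega)}^2$ cannot hold together with (i); at best you recover $\|h^{-\frac12}\pi_0 u_h\|_{L^2(\partial\Omega)}^2$, the norm of the piecewise-constant projection. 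The repair is standard: the fluctuation satisfies $\|h^{-\frac12}(u_h-\pi_0 u_h)\|_{L^2(\partial\Omega)}\lesssim\|h^{\frac12}\nabla u_h\times n\|_{L^2(\partial\Omega)}\lesssim\|\nabla u_h\|_{L^2(\Omega)}$ by a facewise Poincar\'e inequality and a discrete trace inequality, and is then absorbed into the bulk term you already control from $A_{Nit}(u_h,u_h)$. For general $P_k$ the same one-degree deficit appears (zero trace on $F$ forces $\nabla w_h\cdot n|_F\in P_{k-1}$) and the same fix applies. Note also that the sign in (ii) is inconsistent with the sign of the fourth term in your expansion. Finally, the paper's $\varphi_\partial$ is not assumed to vanish on $\partial\Omega$, so an alternative is to drop constraint (i) and instead bound the surviving term $-\zeta\langle\nabla u_h\cdot n,w_h\rangle_{\partial\Omega}$ via the trace inequality $\|h^{\frac12}\nabla u_h\cdot n\|_{L^2(\partial\Omega)}\lesssim\|\nabla u_h\|_{L^2(\Omega)}$.
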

It follows that we have the required stability and we may prove
stability of the residual based stabilisation using the techniques
discussed above. 
\begin{remark}\label{stab_func}
The above lemma can be rewritten as $\exists w_h \in
V_h$ such that
\begin{equation}\label{stab_func1}
c_w \|u_h\|^2_{1,h} \leq A_{Nit}(u_h,w_h)
\end{equation}
with $w_h:= u_h + \zeta \varphi_{\partial}$,  $c_w>0$ and
\begin{equation}\label{stab_func2}
\|\varphi_{\partial}\|_{1,h} \leq c_{\partial} \|u_h\|_{\frac12,h,\partial \Omega}.
\end{equation}
The function $\varphi_{\partial}$ ensures the control of the boundary contribution.
\end{remark}
We now give an alternative proof of the equivalent of Lemma
\ref{stab_non_sym_Nit}
for the formulation \eqref{BB_stab} using the framework of penalty on
the distance to the stable subspace. The result holds for 
multiplier spaces satisfying the following compatibility assumption.\\[5mm]
{\bf Assumption [A1]:} the following continuity holds for the spaces $V_h$ and
$\Lambda_h$.
For every $v_h \in V_h$ there exists $z_h(v_h) \in \Lambda_h$ such
that 
\begin{equation}\label{compatible}
b(u_h, \nabla v_h\cdot n + z_h(v_h)) \leq C_z \|\nabla
u_h\|_{L^2(\Omega)} \|\nabla v_h\|_{L^2(\Omega)}, \quad \|h^{\frac12}
z_h(v_h)\|_{L^2(\partial \Omega)} \leq c_z \|\nabla v_h\|_{L^2(\Omega)}.
\end{equation}

\begin{theorem}\label{BBNitstab}
Let $V_h\times\Lambda_h$ satisfy assumption {\bf [A1]}.
Then for all $\{u_h,\lambda_h\} \in V_h\times \Lambda_h$ there holds 
\begin{equation}
\|u_h\|_{1,h} + \|\lambda_h\|_{L_h} \lesssim \sup_{\{w_h,\nu_h\} \in V_h
  \times \Lambda_h}\frac{A_{BH}[(u_h,\lambda_h),(w_h,\nu_h)]}{\|w_h\|_{1,h} + \|\nu_h\|_{L_h} }.
\end{equation}
\end{theorem}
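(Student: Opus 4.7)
The plan is to view the Barbosa--Hughes stabilisation $\gamma\langle h(\lambda_h + \nabla u_h\cdot n), \mu_h + \nabla v_h\cdot n\rangle_{\partial\Omega}$ as a penalty on the distance from $\Lambda_h$ to the ``stable subspace'' $N_h:=\{-\nabla v_h\cdot n : v_h\in V_h\}$. The pair $V_h\times N_h$ is inf-sup stable by the non-symmetric Nitsche stability result of Lemma \ref{stab_non_sym_Nit} and Remark \ref{stab_func}. The inf-sup test pair for $(u_h,\lambda_h)$ will be built as a weighted combination of the ``diagonal'' test $(u_h,\lambda_h)$ and a ``Nitsche-compatible'' pair $(w_h^\star,\mu_h^\star)$ produced by Remark \ref{stab_func} and Assumption {\bf [A1]}.

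\textbf{Decomposition and diagonal test.} I first introduce the residuals $\tilde\lambda_h:=\lambda_h+\nabla u_h\cdot n$ and $\tilde\mu_h:=\mu_h+\nabla w_h\cdot n$ and, by adding and subtracting these inside the two $b$-terms, verify the identity
\[
A_{BH}[(u_h,\lambda_h),(w_h,\mu_h)] = A_{Nit}(u_h,w_h) + b(\tilde\lambda_h,w_h) - b(u_h,\tilde\mu_h) + \gamma\langle h\tilde\lambda_h,\tilde\mu_h\rangle_{\partial\Omega}.
\]
Testing with $(w_h,\mu_h)=(u_h,\lambda_h)$, the two cross $b$-terms cancel and $A_{Nit}(u_h,u_h)=\|\nabla u_h\|^2_{L^2(\Omega)}$, giving
\[
A_{BH}[(u_h,\lambda_h),(u_h,\lambda_h)] = \|\nabla u_h\|_{L^2(\Omega)}^2 + \gamma\|h^{\frac12}\tilde\lambda_h\|_{L^2(\partial\Omega)}^2.
\]
Combining this with a discrete trace inequality $\|h^{\frac12}\nabla u_h\cdot n\|_{L^2(\partial\Omega)}\leq c_t\|\nabla u_h\|_{L^2(\Omega)}$ and a triangle inequality yields $\|\lambda_h\|_{L_h}\lesssim\|\tilde\lambda_h\|_{L_h}+\|\nabla u_h\|_{L^2(\Omega)}$, so the diagonal test already controls $\|\nabla u_h\|_{L^2(\Omega)}^2+\|\lambda_h\|_{L_h}^2+\gamma\|h^{\frac12}\tilde\lambda_h\|^2_{L^2(\partial\Omega)}$. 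What it does not control is the boundary contribution $\zeta\|h^{-\frac12}u_h\|^2_{L^2(\partial\Omega)}$ in $\|u_h\|_{1,h}^2$.

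\textbf{Nitsche-compatible test and combination.} Using Remark \ref{stab_func} I pick $w_h^\star:=u_h+\zeta\varphi_\partial$ which satisfies $A_{Nit}(u_h,w_h^\star)\geq c_w\|u_h\|_{1,h}^2$ and $\|w_h^\star\|_{1,h}\lesssim\|u_h\|_{1,h}$. Assumption {\bf [A1]} then supplies $\mu_h^\star:=z_h(w_h^\star)\in\Lambda_h$ so that $\tilde\mu_h^\star=z_h(w_h^\star)+\nabla w_h^\star\cdot n$ satisfies $|b(u_h,\tilde\mu_h^\star)|\leq C_z\|\nabla u_h\|\|\nabla w_h^\star\|$ and $\|h^{\frac12}\tilde\mu_h^\star\|_{L^2(\partial\Omega)}\leq(c_z+c_t)\|\nabla w_h^\star\|$. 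Inserted into the identity this gives
\[
A_{BH}[(u_h,\lambda_h),(w_h^\star,\mu_h^\star)]\geq c_w\|u_h\|_{1,h}^2 - |b(\tilde\lambda_h,w_h^\star)| - C_z\|\nabla u_h\|\|\nabla w_h^\star\| - \gamma\|h^{\frac12}\tilde\lambda_h\|\|h^{\frac12}\tilde\mu_h^\star\|.
\]
I then take the composite test $(v_h,\mu_h):=(u_h,\lambda_h)+\alpha(w_h^\star,\mu_h^\star)$ for a small $\alpha>0$ depending on $c_w,C_z,c_z,\gamma,\zeta$, and use Young's inequality to absorb each cross term partly into $\|\nabla u_h\|^2+\gamma\|h^{\frac12}\tilde\lambda_h\|^2$ (from the $(u_h,\lambda_h)$-part of the test) and partly into $\alpha c_w\|u_h\|_{1,h}^2$ (from the $(w_h^\star,\mu_h^\star)$-part). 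For $\alpha$ sufficiently small this gives $A_{BH}[(u_h,\lambda_h),(v_h,\mu_h)]\gtrsim\|u_h\|_{1,h}^2+\|\lambda_h\|_{L_h}^2$, while $\|v_h\|_{1,h}+\|\mu_h\|_{L_h}\lesssim\|u_h\|_{1,h}+\|\lambda_h\|_{L_h}$ by Remark \ref{stab_func} and the second bound in \eqref{compatible}, yielding the claimed inf-sup. The main obstacle will be the cross term $b(\tilde\lambda_h,w_h^\star)$: because $w_h^\star$ carries the full $\|u_h\|_{1,h}$ weight on the boundary, the Young split of $\|h^{\frac12}\tilde\lambda_h\|\,\|h^{-\frac12}w_h^\star\|_{L^2(\partial\Omega)}$ must simultaneously leave a positive fraction of $\gamma\|h^{\frac12}\tilde\lambda_h\|^2$ intact and fit its boundary factor inside $\alpha c_w\zeta\|h^{-\frac12}u_h\|^2_{L^2(\partial\Omega)}$; this pins down the admissible range of $\alpha$ and reveals the interplay between $\gamma$, $\zeta$ and the constants from {\bf [A1]}.
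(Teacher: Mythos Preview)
Your proposal is correct and follows essentially the same strategy as the paper: the key identity rewriting $A_{BH}$ in terms of $A_{Nit}$ plus residual terms, the diagonal test for positivity of $\|\nabla u_h\|^2+\gamma\|h^{1/2}\tilde\lambda_h\|^2$, and the use of Remark~\ref{stab_func} together with Assumption~{\bf [A1]} to recover the boundary control are exactly what the paper does.

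The only difference is in how the test function is assembled. You combine two tests $(u_h,\lambda_h)+\alpha(w_h^\star,\mu_h^\star)$ with $\mu_h^\star=z_h(w_h^\star)$ and a second smallness parameter $\alpha$. The paper instead uses the single test pair
\[
(w_h,\mu_h)=(u_h+\zeta\varphi_\partial,\ \lambda_h+z_h(\zeta\varphi_\partial)),
\]
applying $z_h$ only to the correction $\zeta\varphi_\partial$ rather than to the full $w_h^\star$. Because this test already contains the diagonal pair $(u_h,\lambda_h)$, the cross terms $b(\tilde\lambda_h,u_h)-b(u_h,\tilde\lambda_h)$ cancel automatically and the only perturbations are $\zeta$-small, so no extra parameter $\alpha$ is needed: $\zeta$ itself (the free parameter in the $\|\cdot\|_{1,h}$ norm) does the work. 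Your version works too, but produces larger cross terms---in particular $b(u_h,\tilde\mu_h^\star)$ is of order one rather than $O(\zeta)$---which is why you need the additional $\alpha$ and the slightly more delicate Young balance you flag at the end. Adopting the paper's choice of $\mu_h$ would remove that obstacle.
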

\begin{proof}
First take $v_h = u_h$ and $\mu_h = \lambda_h$ to obtain
\begin{multline}
\|\nabla u_h\|_{L^2(\Omega)}^2 + \gamma \|h^{\frac12} (\lambda_h +
\nabla u_h)\|_{L^2(\partial \Omega)}^2 \\
= a(u_h,u_h)+b(\lambda_h,u_h)-b(\lambda_h, u_h)+\gamma \left<h(\lambda_h + \nabla u_h \cdot n),
\lambda_h + \nabla u_h \cdot n\right>_{\partial \Omega}
\end{multline}
We add and subtract $\nabla u_h \cdot n$ and $\nabla v_h \cdot n$ in
the $b(\cdot,\cdot)$ forms of the formulation
\begin{multline}\label{rewrite}
a(u_h,v_h)+b(\lambda_h,v_h)-b(u_h,\mu_h)+\gamma \left<h(\lambda_h + \nabla u_h \cdot n),
\mu_h + \nabla v_h \cdot n\right>_{\partial \Omega} \\ 
= a(u_h,v_h)+b(-\nabla u_h \cdot n,v_h) +b(\nabla u_h \cdot
n+\lambda_h,v_h)\\
-b(u_h,-\nabla v_h \cdot
n)-b(u_h,\nabla v_h \cdot
n+\mu_h)+\gamma \left<h(\lambda + \nabla u_h \cdot n),
\mu_h + \nabla v_h \cdot n\right>_{\partial \Omega}\\
= A_{Nit}(u_h,v_h)\\
+b(\nabla u_h \cdot n+\lambda_h,v_h) -b(u_h,\nabla v_h \cdot
n+\mu_h) +\gamma \left< h(\lambda_h + \nabla u_h \cdot n),
\mu_h+\nabla v_h \cdot n \right>_{\partial \Omega}.
\end{multline}
We will first show that by taking $v_h=w_h$ (of \eqref{stab_func1}-\eqref{stab_func2}) and
$\mu_h := \lambda_h + z_h(\zeta \varphi_\partial)$ we have
\begin{equation*}
\|u_h\|^2_{1,h} + \gamma h \|\lambda_h + \nabla u_h \cdot
n\|_{L^2(\partial \Omega)}^2 \lesssim A_{BH}[(u_h,\lambda_h),(w_h,\lambda_h + z_h(\zeta \varphi_\partial))].
\end{equation*}
First note that by the construction of $w_h$ and $\mu_h$ and the form
\eqref{rewrite} we have
\begin{multline}\label{BBtoNit}
A_{BH}[(u_h,\lambda_h),(w_h,\mu_h)] = A_{Nit}(u_h,w_h) + \gamma \|h^{\frac12} (\lambda_h +
\nabla u_h\cdot n)\|_{L^2(\partial \Omega)}^2  \\
+\zeta b(\nabla u_h \cdot n+\lambda_h,\varphi_\partial) -\zeta b(u_h,\nabla \varphi_\partial \cdot
n+z_h(\varphi_\partial)) \\ +\gamma \zeta \left<h(\nabla u_h \cdot n+\lambda_h ),
\nabla \varphi_\partial \cdot
n+z_h( \varphi_\partial)\right>_{\partial \Omega}.
\end{multline}
Then note that by the continuity of $b(\cdot,\cdot)$ and by using the
Cauchy-Schwarz inequality in the penalty term we have
\begin{multline}\label{bound1}
\zeta b(\nabla u_h \cdot n+\lambda_h,\varphi_\partial) \leq c_b \|h^{\frac12} (\lambda_h +
\nabla u_h\cdot n)\|_{L^2(\partial \Omega)} \zeta c_\partial
\|u_h\|_{\frac12,h,\partial \Omega} \\
\leq \frac14 \gamma \|h^{\frac12} (\lambda_h +
\nabla u_h\cdot n)\|_{L^2(\partial \Omega)}^2 + \gamma^{-1} \zeta^2 c_b^2
c_{\partial}^2 \|u_h\|_{\frac12,h,\partial \Omega}^2,
\end{multline}
\begin{multline}\label{bound2}
\zeta b(u_h,\nabla \varphi_\partial \cdot
n+z_h(\varphi_\partial)) \leq \zeta C_z \|\nabla
u_h\|_{L^2(\Omega)} \|\varphi_\partial\|_{1,h} \\
\leq \frac12 c_w \|\nabla
u_h\|_{L^2(\Omega)}^2 + C_z^2 c^{-1}_w c_\partial^2 \zeta^2 \|u_h\|_{\frac12,h,\partial \Omega}^2
\end{multline}
and 
\begin{multline}\label{bound3}
\gamma \zeta (h(\nabla u_h \cdot n+\lambda_h ),
\nabla \varphi_\partial \cdot
n+z_h( \varphi_\partial)) \lesssim \frac14 \gamma \|h^{\frac12} (\lambda_h +
\nabla u_h\cdot n)\|_{L^2(\partial \Omega)}^2\\
+ 2 (c_t^2 c_{\partial}^2 +c_z^2) \zeta^2 \gamma \|u_h\|_{\frac12,h,\partial \Omega}^2.
\end{multline}
If we choose $\zeta$ small enough, it
follows from \eqref{BBtoNit}, \eqref{stab_func1}-\eqref{stab_func2} and the bounds
\eqref{bound1}-\eqref{bound3} that
\[
A_{BH}[(u_h,\lambda_h),(w_h,\mu_h)] \ge \frac12 c_w \|u_h\|_{1,h}^2 +
\frac12 \gamma \|h^{\frac12} (\lambda_h +
\nabla u_h\cdot n)\|_{L^2(\partial \Omega)}^2.
\]
Since $\|h^\frac12 \lambda_h\|_{L^2(\partial \Omega)} \leq \|h^{\frac12} (\lambda_h +
\nabla u_h\cdot n)\|_{L^2(\partial \Omega)} + c_t \|\nabla
u_h\|_{L^2(\Omega)}$ we deduce that 
\[
\|u_h\|_{1,h}^2 + \|h^\frac12 \lambda_h\|_{L^2(\partial \Omega)}^2
\lesssim A_{BH}[(u_h,\lambda_h),(w_h,\mu_h)].
\]
It only remains to show that 
\[
\|u_h+\zeta \varphi_\partial\|_{1,h} + \|h^\frac12 (\lambda_h+z_h(\zeta \varphi_\partial))\|_{L^2(\partial \Omega)} \lesssim \|u_h\|_{1,h} + \|h^\frac12 \lambda_h\|_{L^2(\partial \Omega)}.
\]
This is immediate by the triangle inequality and the stability
\[
\| \varphi_\partial\|_{1,h}  +\|h^{\frac12} z_h(\zeta
\varphi_\partial)\|_{L^2(\partial \Omega)} \lesssim \|u_h\|_{\frac12,h,\partial \Omega}.
\]
\end{proof}
\begin{remark}
The condition \eqref{compatible} is easily satisfied for any
reasonable space
$\Lambda_h$. For spaces including discontinuous functions on boundary elements $F_j$ take
$z_h(v_h)\vert_{F_j} := -\mbox{meas}(F_j)^{-1} \int_{F_j} \nabla v_h \cdot n ~\mbox{d}s$. If
the spaces $\Lambda_h$ consists of continuous functions decompose the
boundary in macro patches $F_j$ consisting of a sufficient number of elements for the construction
of functions $z_h(v_h) \in H^1_0(F_j)$ such that $\int_{F_j}
z_h(v_h)~\mbox{d}s = -\int_{F_j} \nabla v_h \cdot n ~\mbox{d}s$. Then
on each subdomain $F_j$ there holds (with $\pi_{L}$ denoting the
$L^2$-projection on constant functions on $F_j$)
\[
\left<u_h, \nabla v_h \cdot n + z_h(v_h)  \right>_{F_j} = \left<u_h - \pi_{L} u_h, \nabla v_h \cdot n + z_h(v_h)  \right>_{F_j} . 
\]
It also follows that whenever the choice $z_h(v_h) = - \nabla v_h
\cdot n$ is possible, the right hand sides of \eqref{bound2} and
\eqref{bound3} are zero and therefore the stability is obtained
independently of the stability parameter $\gamma$. It is then
straightforward to show, using the above inf-sup argument, that the solution $u_h$ of
\eqref{BB_stab} converges to that of \eqref{non_sym_Nitsche} in the
limit $\gamma \rightarrow \infty$. This is consistent with the
argument of \cite{Sten95}, since the local elimination of the Lagrange
multiplier in \eqref{BB_stab} yields the non-symmetric version of
Nitsche's method with a penalty that vanishes in the limit $\gamma \rightarrow \infty$.
\end{remark}
\begin{remark}
It follows from Theorem \ref{BBNitstab} that the nonsymmetric version
of the stabilisation of Barbosa and
Hughes, can be interpreted as a penalty on the distance to the stable
subspace,
consisting of the normal derivatives of the primal
finite element space in the setting of the non-symmetric Nitsche
method. Loosely speaking, we can
consider the non-symmetric Nitsche
method as a special member of the set of inf-sup stable Lagrange
multiplier methods. An associated stabilisation based on penalty on
the distance to a stable subspace is the Barbosa-Hughes method.
In case the Lagrange multiplier can be eliminated locally the two
methods are equivalent and the stabilised method is robust for large
values of the penalty parameter.
\end{remark}
\section{Numerical example}
The aim of this section is to compare the performance of the different
methods
in the simple case of weak imposition of boundary conditions. All
computations were carried out using Freefem++ \cite{freefem}.

We consider the Poisson problem in the unit square, $\Omega := (0,1)\times(0,1)$. The source term
and boundary terms are chosen so that
\[
u(x,y) = \frac{1}{2 \pi^2} \cos(\pi x) \cos(\pi y)+ 0.25 x (1-x) y (1-y).
\]
We compute the solution using the non-symmetric residual stabilised
method, a projection stabilisation method and penalty free Nitsche
methods. Below the forms $a(\cdot,\cdot)$ and $b(\cdot,\cdot)$ are
given by \eqref{ex_a} and \eqref{ex_b}. We impose Dirichlet boundary
conditions on the boundaries $y=0$ and $y=1$ (denoted $\partial
\Omega_D$ below). On the other two
boundaries we impose Neumann conditions. In all cases the primal
variable $u_h$ is approximated using continuous finite elements, of
first or second polynomial order,
\[
V_h^k := \{v_h \in C^0(\bar \Omega) : v_h \vert_K \in
\mathbb{P}_k(K),\, \forall K \in \mathcal{T}_h \},\, k=1,2.
\]
Let $G_h:=\{F\}$ denote a trace mesh on $\partial \Omega_D$,
coinciding with the trace mesh of $\mathcal{T}_h$ and $G_{\tilde h}:=\{F\}$ a trace mesh on $\partial \Omega_D$,
such that the local mesh size in $G_{\tilde h}$ is half that of
$\mathcal{T}_h$, $h=2 \tilde h$.
Define the Lagrange multiplier spaces by 
\[
\Lambda_h^1:=\{v_h \in L^2(\partial \Omega_D): v_h \vert_{F} \in
\mathbb{P}_0,\, \forall F \in G_{\tilde h}\},
\] 
\[
\Lambda_h^2:=\{v_h \in L^2(\partial \Omega_D): v_h \vert_{F} \in
\mathbb{P}_2,\, \forall F \in G_h\}.
\] 
These spaces are chosen so that the pair $V_h^k \times \Lambda_h^k$, $k=1,2$ are
unstable. The stabilising spaces were then both chosen as
\[
L_h^k:= \{v_h \in C^0(\partial \bar \Omega_D): v_h \vert_{F} \in
\mathbb{P}_1,\, \forall F \in G_h\},\, k=1,2.
\]
Here $C^0(\partial \bar \Omega_D)$ stands for functions continuous on
each separate connected component of $\partial \Omega_D$. It is
straightforward to verify that the spaces $V_h^k \times L_h^k$ are
stable for our problem. In all figures below square markers refer to
methods using $k=1$ and circles to methods using $k=2$. Empty markers
indicate convergence of the error in the $H^1$-norm and filled markers in
the $L^2$-norm.
We have also plotted for reference the slopes corresponding to
$O(h^\alpha)$ convergence with $\alpha=1$ in dotted line, $\alpha=2$
in a dashed line and $\alpha=3$ in dash dotted line. These reference
plots are the same for all methods so that the relative performance
can be assessed. In all cases the stabilisation parameter has been set
to $\gamma=1$. This parameter appeared to give a resonable result for
all methods. We observed that increasing the parameter can improve the
accuracy in the multiplier at the expense of the primal variable and
vice versa.

We consider the formulation \eqref{stab_fem} with the stabilisation
given by $$s(\lambda_h,\mu_h) = \left<\gamma h (\lambda_h - \pi_L \lambda_h,
  \mu_h - \pi_L \mu_h \right>_{\partial \Omega_D}$$ and the finite
element spaces proposed above. In figure \ref{fig:proj_stab}, left plot, we give
the convergence plots for $k=1$ and $k=2$. Then we consider the method
\eqref{BB_stab} and give the same convergence curves in the right plot
of \eqref{fig:proj_stab}. For comparison we also present the results obtained
using the inf-sup stable finite element pairs $V_h^k \times
L_h^k$. Finally we consider the penalty free version of Nitsche's
method, both the non-symmetric version given by equation
\eqref{non_sym_Nitsche} and its symmetric equivalent that may be written
\[
\int_\Omega \nabla u_h \cdot \nabla v_h ~\mbox{d} x - \int_{\partial
  \Omega} \nabla u_h \cdot n v_h  ~\mbox{d} s - \int_{\partial
  \Omega} \nabla v_h \cdot n u_h ~\mbox{d} s = \int_\Omega f v ~ \mbox{d}x - \int_{\partial
  \Omega} \nabla v_h \cdot n g~\mbox{d} s.
\]
Observe that the stability properties of this latter method are
unknown, but for the computations considered herein the method
remained stable and optimally convergent. We report the convergence of
the Nitsche type methods in the left plot of figure
\eqref{fig:Nitsche}. The symmetric version is distinguished by thick
lines. A consequence of the close relation between the Barbosa-Hughes
method and Nitsche's method is that in both the symmetric and the
non-symmetric case the unpenalised Nitsche methods are recovered in
the limit as the stabilisation goes to infinity. This is illustrated
in the right plot of \eqref{fig:Nitsche} where we show the variation
$L^2$-error of the difference between the 
solution obtained by the Barbosa-Hughes stabilisation and Nitsche's
method
on a $20\times20$ mesh as the penalty parameter goes to infinity. In
both the non-symmetric and the symmetric case the penalty free Nitsche
type methods are recovered. Observe the strong increase in the error
for the symmetric case at approximately $2.1$ where the matrix becomes
singular. For higher values of the penalty parameter no instabilities
were observed.

We make the following observations. The $H^1$-norm error is almost
identical for all methods. For the $L^2$-norm error all adjoint
consistent methods have very similar error curves, whereas the 
lack of adjoint consistency is expressed only as a larger error
constant 
and not in a loss of convergence order as expected from the
analysis. In experiments not reported here we imposed Dirichlet
conditions all around the domain to see the effects on the corners in
the non-symmetric Nitsche method, but optimal convergence was still
attained on the finest meshes. We also studied the error in the fluxes
approximated by the multiplier and the results were similar to that of
the $L^2$-norm error.
\begin{figure}
\includegraphics[width=6.5cm]{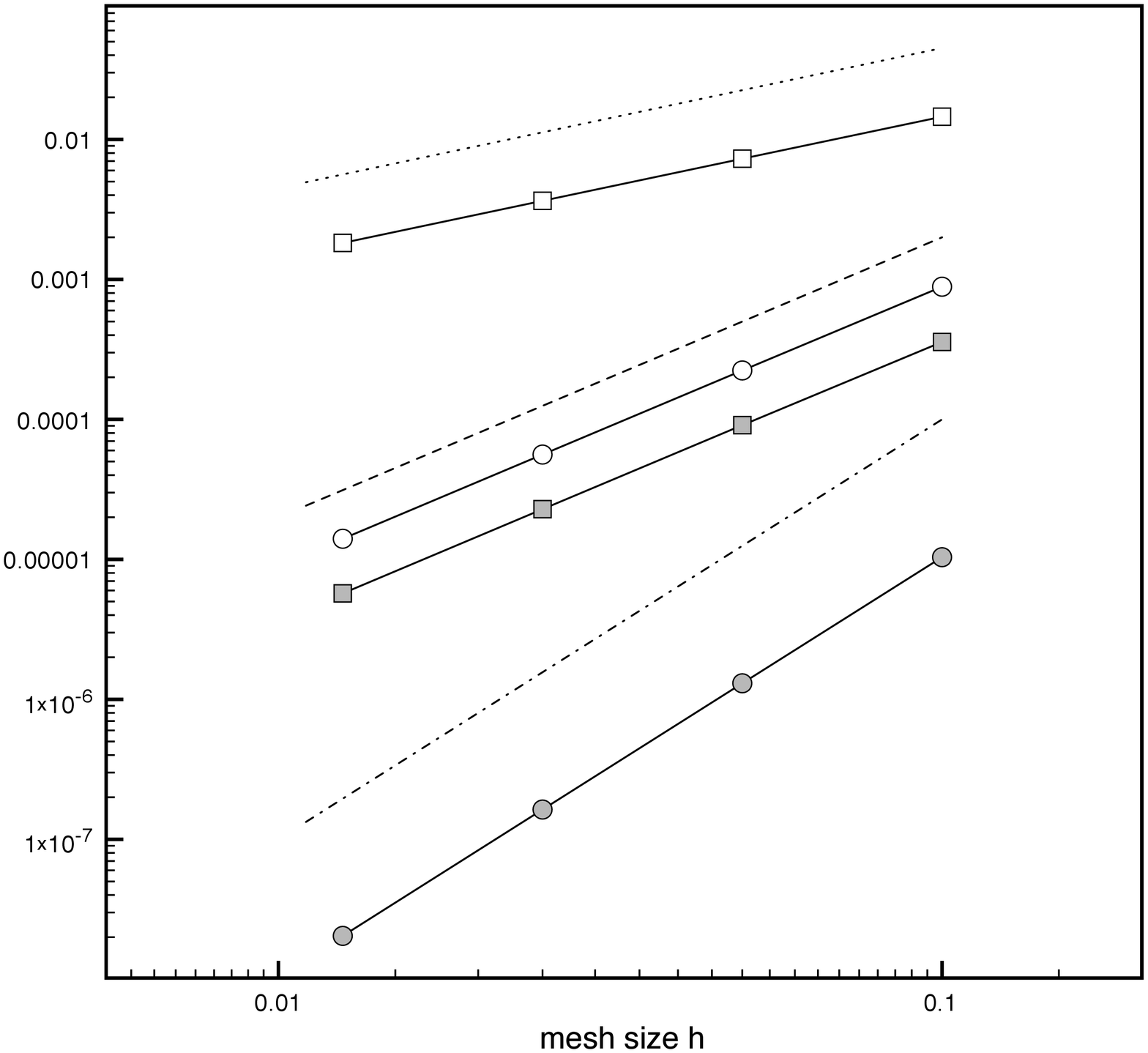}
\includegraphics[width=6.5cm]{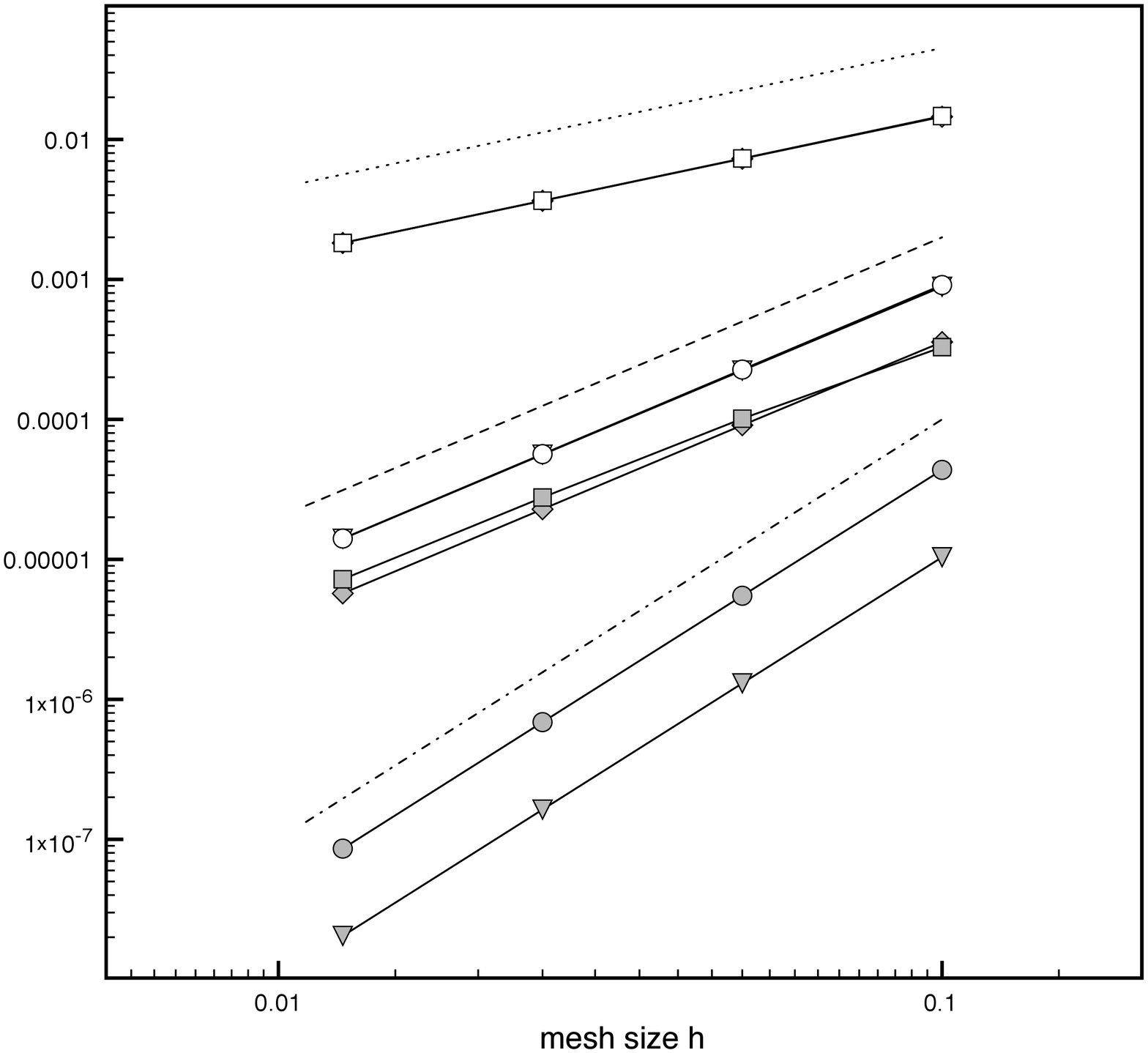}
\caption{Left plot: convergence of the projection stabilised methods,
  ($k=1$ square marker, $k=2$ round marker, markers for $L^2$-error
  filled).
Right plot: convergence of the method \eqref{BB_stab} ($k=1$ square marker, $k=2$ round marker, markers for $L^2$-error filled) and the stable Lagrange multiplier method ($k=1$ diamond marker, $k=2$ triangular marker, markers for $L^2$-error  filled).  }
\label{fig:proj_stab}
\end{figure}
\begin{figure}
\includegraphics[width=6.5cm]{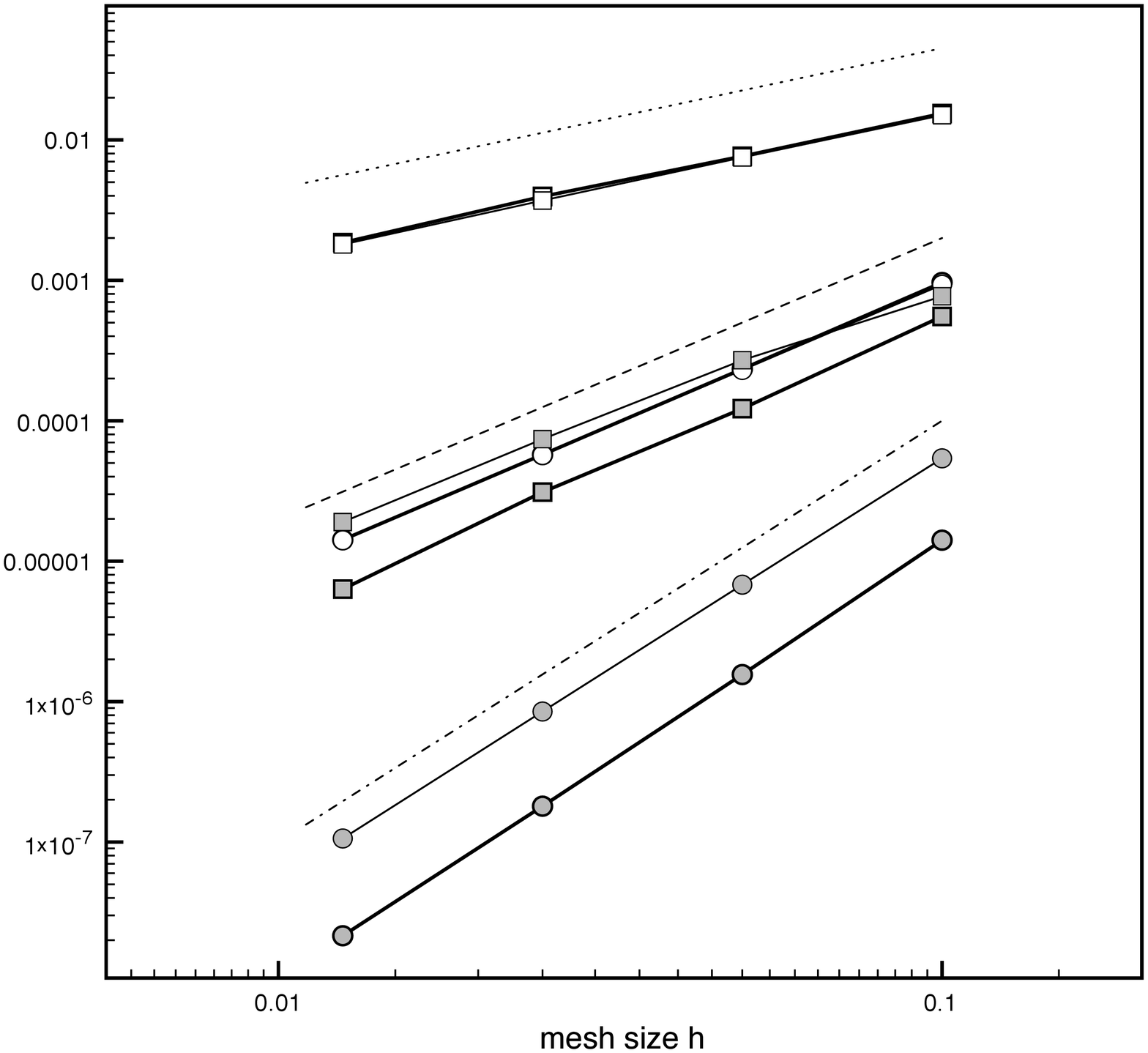}
\includegraphics[width=6.5cm]{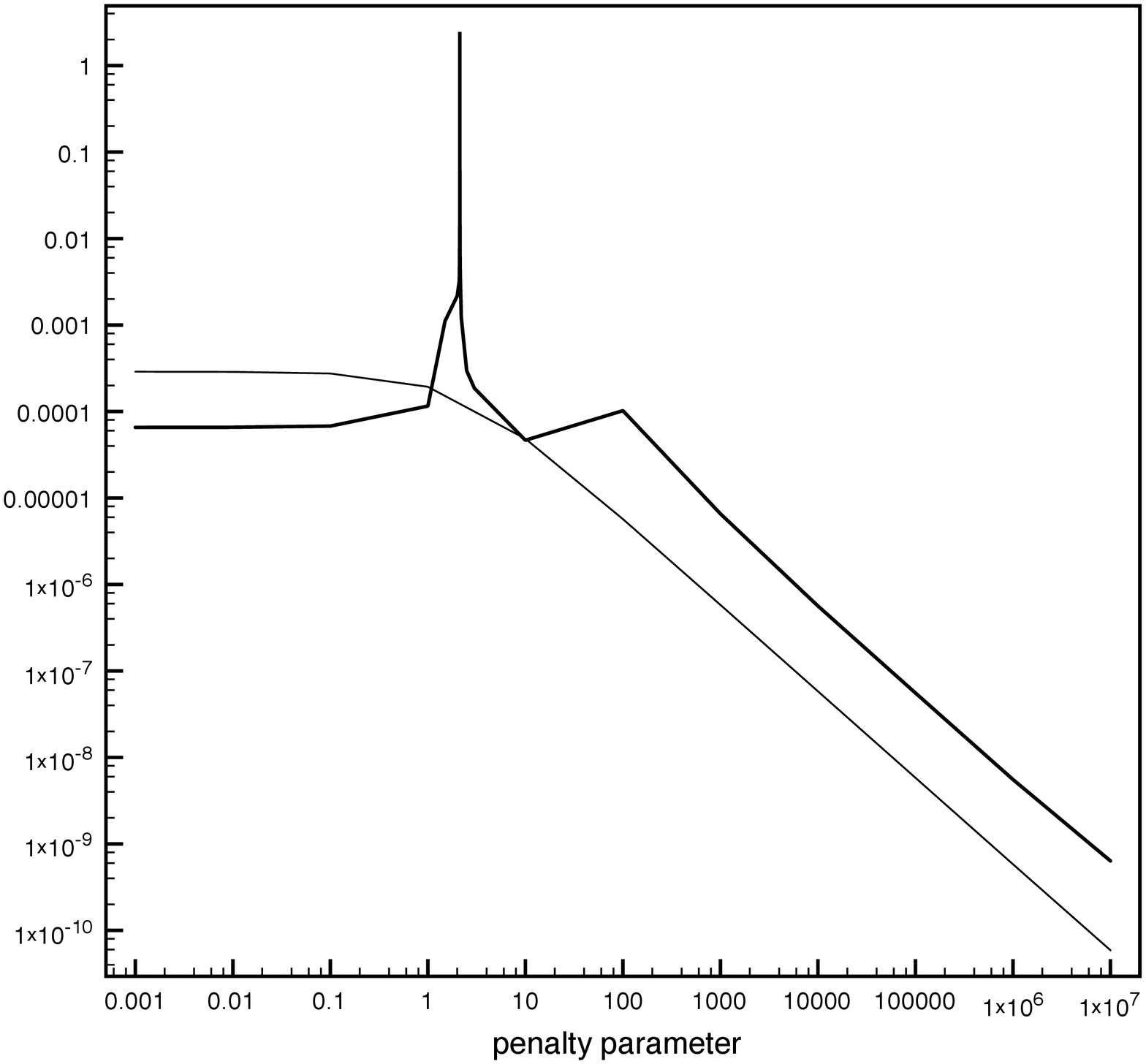}
\caption{Left plot: convergence of the penalty free Nitsche type methods,
  ($k=1$ square marker, $k=2$ round marker, markers for $L^2$-error
  filled, symmetric version plotted with thick line).
Right plot: asymptotic behavior of the difference in the $L^2$-norm
between the solution of the Barbosa-Hughes method and the
corresponding Nitsche type method (symmetric version plotted with thick line).  }
\label{fig:Nitsche}
\end{figure}
\section{CONCLUSION}
We have given an analysis of projection stabilised Lagrange
multipliers in an abstract framework and shown some applications of
this therory. We then showed how the residual based stabilisation
method of Barbosa-Hughes can be
interpreted as a method penalising the distance to a stable subspace
by relating it to the inf-sup stable penalty free Nitsche method. The
methods were tested and compared numerically on a simple model
problem. All these methods appear to have very similar properties. In
particular optimal convergence was observed in both the $H^1$- and the
$L^2$-norms independently of adjoint consistency. Nevertheless adjoint consistent
methods have smaller errors in $L^2$-norm for a fixed mesh size and
similarly for the approximation of the fluxes. The observed difference was a moderate factor. One may
conclude from this that it is 
reasonable that one may base the choice of method entirely on what is
the easiest to implement for a given application. It also follows from
the discussion of Section \ref{projpenal} that the jump penalty operator
provides a stabilisation requiring minimal knowledge of the inf-sup
stable space $L_h$.
Two open problems
are the question of stability of the penalty free symmetric Nitsche
method and accuracy in the $L^2$-norm of the non-symmetric Nitsche
method.
Both of which are observed.\\[5mm]
\begin{center}
{\bf{Acknowledgment}}
\end{center}
Section \ref{intro} to \ref{projpenal} of this paper was written for a doctoral course given
in September 2009 at the doctoral school ICMS, Paris-Est
Marne-la-Vall\'ee, that I gave as invited Professor. The kind
hospitality of Professors Alexandre Ern and Robert Eymard is
graciously acknowledged. I also acknowledge funding from EPSRC (award
number EP/J002313/1). Finally I would like to thank the reviewers of
the paper whose constructive criticism helped improve the manuscript.
\begin{center}
{\bf Appendix: construction of the Fortin interpolant}
\end{center}
We will use the notation of section \ref{sec:applications}\ref{unfitted} and prove that the
Fortin interpolant $\pi_F v$ satisfying \eqref{fortin} exists and that
the stability constant is independent of how the interface $\Gamma$
cuts the mesh $\mathcal{T}_h.$ Note that the stability of the Fortin
interpolant writes
\[
\sum_{i=1}^2 \|\nabla \pi_F v\|_{\Omega_i}^2 + \|\pi_F v_1 - \pi_F
v_2\|^2_{\frac12,\Gamma}\lesssim \sum_{i=1}^2 \|\nabla v\|_{\Omega_i}^2 + \| v_1 - v_2\|^2_{\frac12,\Gamma}
\]
where the hidden constant must be independent of the mesh-interface intersection.
We introduce the extension operators $\mathbb{E}_i$ such that for all
$v \in V_i$,  $\mathbb{E}_i v \in H^1(\mathcal{T}_{ih})$,
$\mathbb{E}_i v\vert_{\Omega_i} = v$ and 
$\|\mathbb{E}_i v\|_{H^1(\mathcal{T}_{ih})} \lesssim
\|v\|_{H^1(\Omega_i)}$. Here $\mathcal{T}_{ih}$ denotes the
mesh-domain defined as $\cup_{K \in \mathcal{T}_{ih}} K$.
Let $\mathcal{I}^i_h:H^1(\mathcal{T}_{ih}) \rightarrow V_{ih}$
denote an $H^1(\Omega)$-stable interpolant. For each $j$ define the
extended patch $\varpi^i_j:= \omega^i_j  \cup F_j$. Then on each
patch $\varpi^i_j $ define a function $\varphi^i_j \in V_{ih}$ with
$\mbox{supp}~ \varphi^i_j = \bar \varpi^i_j $,
$\varphi\vert_{\partial \varpi^i_j \cap \Omega_i} = 0$ and
\[
\int_{F_j} \varphi^i_j ~\mbox{d}s = O(H_i),\, \|\nabla
\varphi^i_j\|_{\varpi^i_j} = O(1).
\]
Define $\pi_F v_i := \mathcal{I}^i_h \mathbb{E}_i v_i + \sum_j \alpha^i_j \varphi^i_j$ where 
$$
\alpha^i_j := \frac{\int_{F_j} (v_i - \mathcal{I}^i_h  \mathbb{E}_i v_i) ~\mbox{d}s}{\int_{F_j} \varphi^i_j ~\mbox{d}s}.
$$
This construction is always possible, provided $H$ is a given (fixed)
factor larger than $h$, typically $H=3h$ is sufficient.
 
Then the orthogonality condition of \eqref{fortin} holds by
construction. It remains to prove the $H^1$-stability. By the triangle
inequality and the disjoint supports of the $\varpi^i_j$ we have,
\begin{equation}\label{H1stab_fortin}
\|\nabla \pi_F v_i\|_{\Omega_i} \lesssim \|\nabla \mathcal{I}^i_h\mathbb{E}_i v_i\|_{\mathcal{T}_{ih}} + \left(\sum_j (\alpha^i_j)^2
\|\nabla \varphi^i_j\|_{\varpi^i_j}^2\right)^{\frac12} = T_1+T_2.
\end{equation}
By the assumed stability of $\mathcal{I}^i_h$ and $\mathbb{E}_i$ we immediately have
\[
T_1 \lesssim  \|\nabla v_i\|_{\Omega_i}.
\]
For $T_2$ we consider one term in the sum and get by the construction 
\begin{multline*}
|\alpha^i_i|
\|\nabla \varphi_j^i\|_{\varpi^i_j}\lesssim H^{-1} \int_{F_j} (v - \mathcal{I}^i_h v)
~\mbox{ds} \lesssim H^{-\frac12} \|v - \mathcal{I}^i_h v\|_{F_j} \\[3mm] \lesssim H^{-1}\|\mathbb{E}_i v -
\mathcal{I}^i_h \mathbb{E}_i v\|_{\varpi^i_j}+  \|\nabla(\mathbb{E}_i v -
\mathcal{I}^i_h \mathbb{E}_i v)\|_{\varpi^i_j}.
\end{multline*}
By the shape regularity of the $\varpi_j^i$ there is no
dependence on the mesh domain intersection in the constants.
Summing over $j$ and using
the fact that the $\varpi^i_j$ are disjoint for fixed $i$ we obtain that
\[
T_2 \lesssim H^{-1} \|\mathbb{E}_i v_i - \mathcal{I}^i_h \mathbb{E}_i v_i\|_{\mathcal{T}_{ih}} + \|\nabla (\mathbb{E}_i v_i - \mathcal{I}^i_h \mathbb{E}_i v_i)\|_{\mathcal{T}_{ih}} 
\] 
and the desired stability estimate follows by the approximation and stability
properties of $\mathcal{I}^i_h$ and the stability of $\mathbb{E}_i$.
It remains to prove that 
\[
\|\pi_F v_1 - \pi_F v_2\|^2_{\frac12,\Gamma}\lesssim \sum_{i=1}^2 \|\nabla v_i\|_{\Omega_i}^2 + \| v_1 - v_2\|^2_{\frac12,\Gamma}.
\]
This follows by adding and subtracting $v_1 - v_2$ in the left
hand side, and using a triangle inequality to obtain
\[
\|\pi_F v_1 - \pi_F v_2\|^2_{\frac12,\Gamma}  \lesssim \| v_1 - v_2\|^2_{\frac12,\Gamma}
+ \|\pi_F v_1 - v_1\|^2_{\frac12,\Gamma}+\|\pi_F v_2 - v_2\|^2_{\frac12,\Gamma}.
\]
We now proceed using a global trace inequality, the stability of
the interpolant $\mathcal{I}^i_h$ and the above bound on the term $T_2$,
to show that 
\[
\|\pi_F v_i - v_i\|^2_{\frac12,\Gamma}\lesssim \|\mathcal{I}^i_h v_i -
v_i\|_{H^1(\Omega_i)}^2 + T_2\lesssim \|\mathcal{I}^i_h \mathbb{E}_i v_i -
\mathbb{E}_i v_i\|_{H^1(\mathcal{T}_{ih})}^2 + \|v\|^2_{V}\lesssim \|v\|^2_{V}.
\]
Collecting the above bounds concludes the proof.

\end{article}
\end{document}